\numberwithin{equation}{section} \setlength{\textwidth}{6.5in}
  \newtheorem{Theorem}{Theorem}[section]
\newtheorem{Lemma}[Theorem]{Lemma}
\newtheorem{Proposition}[Theorem]{Proposition}
\newtheorem{Remark}[Theorem]{Remark}
\newcommand\var{\varepsilon}
\newcommand\display {\displaystyle}
\def\var{\varepsilon}
\def\tscale{\rightharpoonup\rightharpoonup}
\def\weak{\rightharpoonup}
\def\var{\varepsilon}
\def\display{\displaystyle}
\def\weak{\rightharpoonup}
\def\tscale{\overset{2-sc}{\weak}}
\def\scvge{\longrightarrow}
\begin{document}
\thispagestyle{empty}
\title[On the limit spectrum of a degenerate operator]{On the limit spectrum of a degenerate operator in the framework of periodic homogenization or singular perturbation problems}

\author{Ka\"{\i}s Ammari}
\address{LR Analysis and Control of PDEs, LR 22ES03, Department of Mathematics, Faculty of Sciences of Monastir, University of Monastir, Tunisia}
\email{kais.ammari@fsm.rnu.tn}

\author{Ali Sili}
\address{Institut de Math\'ematiques de Marseille (I2M), 
UMR 7373, Aix-Marseille Universit\'e,  CNRS, CMI,  39 rue 
F. Joliot-Curie, 13453 Marseille cedex 13, France} 
\email{ali.sili@univ-amu.fr}

\begin{abstract} In this paper we perform the analysis of the spectrum of a degenerate operator $A_\var$ corresponding to the stationary heat equation in a $\var$-periodic composite medium having two components with high contrast diffusivity.  We prove that although $ A_\var$ is a bounded self-adjoint operator with compact resolvent, the limits of its eigenvalues  when the size $\var$ of the medium tends to zero, make up a part of the spectrum of a unbounded operator $ A_0$, namely the eigenvalues of $ A_0$ located on the left of the first eigenvalue  of the bi-dimensional Laplacian with homogeneous Dirichlet condition on the boundary of the representative cell. We also show that the homogenized problem does not differ in any way from the one-dimensional problem obtained in the study of the local reduction of dimension induced by the homogenization.
\end{abstract}

\subjclass[2010]{35B25; 35B27; 35B40; 35B45; 35J25; 35J57; 35J70; 35P20}
\keywords{Spectrum, Degenerate, High contrast, Homogenization, Singular perturbation}   
 
\maketitle

\tableofcontents

\section{Introduction, setting of the problem and statement of the results}
The purpose of the present work is the asymptotic analysis of the eigenelements of a spectral problem in the framework of the homogenization of a periodic composite medium made up of a $\var$-periodic set of parallel vertical fibers $F_\var$ surrounded by a matrix $M_\var$ having better properties; more precisely, we consider the following problem 
\begin{equation} \label{newstrongformulationH} \left\{\begin{array}{ll}   
    \display  A_\var u_\var = \lambda_\var u_\var  \quad \hbox{in} \,\, \,  \Omega, \\\\
    \hbox{where} \,\, \,  
 \display  A_\var u =  \display    - \var^2 \Delta u   \chi_{F_\var} - \Delta u \chi_{M_\var} \quad \forall \, u \in D({ A}_\var), \\\\
 \hbox {with} \, \, \, \display  D({ A}_\var)  = \left\{ u \in V_h, \, \, \,   A_\var u \in L^2(\Omega), \, \, \, 
    \frac{\partial u}{\partial n} \chi_{\partial F_\var} = - \frac{1}{\var^2} \frac{\partial u}{\partial n} \chi_{\partial M_\var}\right\}, \end{array} \right.\end{equation}
with the following notations: 

\medskip

$\Delta$ denotes the classical Laplacian operator,  $ \Omega$ denotes a bounded rectangular open set of $\mathbb R^3$ of the form $ \Omega : =  \omega \times (0, L)$, $ \omega $ being a domain of $\mathbb R^2$ and $L$ is a positive number, $ \display  \frac{\partial u}{\partial n} \chi_{\partial M_\var}$ (resp. $ \display  \frac{\partial u}{\partial n} \chi_{\partial F_\var}$) denotes the outer normal to the lateral boundary of $M_\var$ (resp. $F_\var$).  The two horizontal variables $x':=(x_1,x_2)$ or $y:=(y_1,y_2)$ will play a different role from that of the vertical one $x_3$. The gradient and the Laplacian with respect to the horizontal variables will be denoted respectively by $\nabla'$ and $ \Delta'$. \par
The space $V_h$ ($h$ stands for homogenization) is defined by
\begin{equation}\label{l'espace V} V_h:= \big\lbrace u \in H^1(\Omega), \, \, u(x',0) = u(x',L) = 0 \, \hbox{ a.e.} \, x'=(x_1, x_2) \in \omega \big\rbrace, \end{equation}
hence, $V_h$ is the subspace of functions in $H^1(\Omega)$ which vanish on the lower and the upper faces of $\Omega$ which is assumed to be the reference configuration of a composite medium whose two components are a set $F_\var$ of vertical cylindrical fibers and its complement, the matrix $M_\var$.  

\medskip 

 Hence, the projection on the horizontal $x'$-plane of the set $F_\var$ is made up of a $\var$-periodic set of disks while the complement of such set represents the projection of $M_\var$.  The characteristic functions of $F_\var$ (resp. $M_\var$) are denoted by $\chi_{F_\var}$ (resp. $\chi_{M_\var}$). The fibers are distributed in $\Omega$ with a period of size $\var$ and the ratio between the conductivity coefficients of the two components is $\var^{-2}$. Throughout the paper, for a measurable set $B$ we denote by $ \vert B \vert$ its Lebesgue measure and by $\chi_B$ its characteristic function. A generic positive constant the value of which may change from a line to another will be denoted by $K$.  \par
 The geometric configuration of the medium may be described more precisely as follows.

\medskip

 Let  $C$ be a square of $\mathbb R^2$ and let $D$ be a disk strictly contained in $C$. The complement of $\overline D$ in $C$ will be denoted by $M'$ in such a way that $C= M' \cup \overline D$. We then define
 \begin{equation}\label{geoemetryhomogenization} \left\{\begin{array}{ll}   \display C^i_\var = (\var C + \var i) \times (0,L); \, \, \, I_\var = \lbrace i \in \mathbb Z^2, \, \, C_\var^i \subset \Omega \rbrace, \quad \omega = {\displaystyle  \bigcup_{i \in I_\var}} (\var C + \var i) ; \\\\ \Omega = {\display \bigcup_{i \in I_\var} }C^i_\var = \omega \times (0,L), \quad F_\var = {\displaystyle \bigcup_{i \in I_\var}} F^i_\var, \quad  F^i_\var = D_\var^i \times (0,L)= (\var \overline D + \var i ) \times (0,L), \\\\
 M_\var = {\display \bigcup_{i \in I_\var}} M^i_\var , \,\,  \, M^i_\var = ( C^i_\var \setminus \overline D^i_\var) \times (0,L) \, \,  \mbox{in such a way} \, 
 \quad  \Omega = F_\var \bigcup M_\var. \end{array} \right.
\end{equation}
  
  \begin{figure} [!ht]
  \begin{center}
    \includegraphics[width= 0.60 \linewidth]{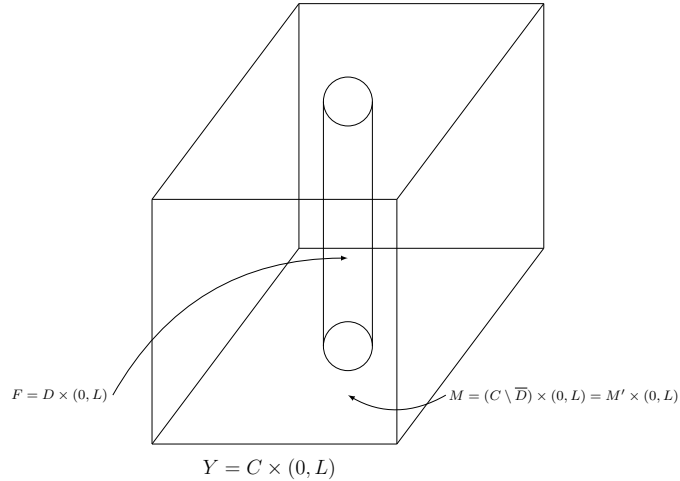}
  
    \end{center}
  \caption{The composite structure after dilation which is also the reference cell $Y$ in the homogenization setting.}
 \label{fig:exmp}
 \end{figure}

  In \textbf{Figure 1} we have represented the representative cell $Y:= C \times (0, L) = (\overline D \cup M') \times (0, L)$ which represents also the composite structure after dilation. 
	
	\medskip
	
  When dealing with the homogenization of a problem posed on the domain $\Omega$ with a geometry given by (\ref{geoemetryhomogenization}), a reduction of dimension $3d-1d$ appears locally in each cell $ \display C^i_\var $; a natural question then arises: what is the relationship between the homogenization process and the associated local reduction of dimension? One of the main points of this article is to show that homogenization does not lead to any new phenomena compared to the local reduction of dimension. 
More precisely, we will show that homogenization is merely a periodic repetition of the local dimension reduction phenomenon. \par
The geometry of the reduction of dimension $3d-1d$ problem is the following: the composite medium consists of a single fiber $F_\var:= (\var  D) \times (0,L)$  surrounded by the matrix $M_\var= \var M' \times (0,L)= \bigl(\var (C\setminus \overline D) \bigr) \times (0,L)$ in such a way  the global domain depends now on the small parameter $\var$; it is defined by $ \Omega_\var:= (\var C) \times (0,L)= F_\var \cup M_\var$ and it may be viewed as the configuration of a thin structure with the characteristic parameter $\var$.  
	
	\medskip
	
   In this setting, the spectral problem (\ref{newstrongformulationH}) takes the following form
  \begin{equation} \label{newstrongformulationS} \left\{\begin{array}{ll}   
    \display  A_\var v_\var = \lambda_\var v_\var  \quad \hbox{in} \,\, \,  \Omega_\var, \\\\
    \hbox{where} \,\, \,  
 \display  A_\var v =  \display    - \var^2 \Delta v   \chi_{F_\var} - \Delta v \chi_{M_\var} \quad \forall \, v \in D({ A}_\var), \\\\
 \hbox {with} \, \, \, \display  D({ A}_\var)  = \left\{ v \in V^\var_s, \, \, A_\var v \in  L^2(\Omega_\var), \, \, \, 
    \frac{\partial v}{\partial n}  \chi_{\partial (\var D)} =  - \frac{1}{\var^2} \frac{\partial v}{\partial n}\chi_{\partial (\var M')} \right\}, \end{array} \right.\end{equation}
  where the space $V^\var_s$ (the subscript "s" stands for singular perturbation) is now defined by
  \begin{equation}\label{l'espace Vbis} V_s^\var:= \left\{v \in H^1(\Omega_\var), \, \, v(x',0) = v(x',L) = 0 \, \hbox{ a.e.} \, x'=(x_1, x_2) \in \var C \right\}. \end{equation}
  
     In order to deal with a problem posed on the fixed domain $\Omega:= C \times (0,L)$, we introduce the classical scaling $u_\var(y',x_3) = v_\var(\var y',x_3), \, \, y' \in C$ which implies 
   \begin{equation} \label{Gradientperturbé}  \display \nabla'_y u_\var(y, x_3) = \var \nabla' v_\var(\var y, x_3) = \var \nabla' v_\var(x', x_3), \, \, \, \forall \, (x', x_3) \in (\var C) \times (0, L),  \end{equation}  (this approach is of course not applicable in the homogenization setting in which we have to deal with $\var^{-2}$ such thin structures). This change of variables transforms the problem (\ref{newstrongformulationS}) into the following singular perturbation problem (recall that $M':= C \setminus \overline D$),
  
  \begin{equation} \label{newstrongformulationS'} \left\{\begin{array}{ll} 
  A_\var u = \lambda_\var u \, \, \, \hbox{in}   \, \, \Omega, \\\\
 \hbox{where} \, \, \,  A_\var u =  \display   \left( - \Delta' u  - \var^2 \frac{\partial^2 u}{\partial x_3^2} \right) \chi_D + \left( - \frac{1}{\var^2}\Delta' u  -  \frac{\partial^2 u}{\partial x_3^2} \right) \chi_{M'},  \quad \forall \, u \in D({ A}_\var), \\\\
 \hbox{with} \\\\
  \display  D({ A}_\var)  = \left\{ \display u \in V_s, \, \, A_\var u \in L^2(\Omega), \, \, \,   \frac{\partial u}{\partial n} \chi_{\partial D} = -\frac{1}{\var^2} \frac{\partial u}{\partial n}\chi_{\partial M'} \right\}, \end{array} \right.\end{equation}
  $V_s$ being the space $V_s^\var$ corresponding to $\var=1$ and defined in (\ref{l'espace Vbis}). 
	
	\medskip
	
  Note that the study of the asymptotic behavior of (\ref{newstrongformulationS}) is the so-called reduction of dimension problem $3d-1d$ since when $\var$ goes to zero the three dimensional domain $\Omega_\var= (\var C) \times (0,L)$ looks like the segment $(0,L)$. \par
   Remarkably, it appears that the homogenized problem is very similar to the limit problem describing the one-dimensional model in the local $3d-1d$ reduction of dimension as explained in \cite{Sili0} (see also \cite{{MurSi},{ParSi}}). This similarity is essentially due to the absence of oscillations in the vertical direction, whereas oscillations in the horizontal plane induce a local reduction of dimension. \par
    We take advantage of that remark to limit ourselves to the complete study of the $3d-1d$ problem which is technically simpler than the homogenization problem and we will only state the results within the framework of homogenization by referring to \cite{Sili0} for an adaptation of the proofs to the homogenization.   
 \par    
Homogenization of a medium with high contrast between its components leads in general to a limit model described by an equation with significant differences compared with the equation of  the media at the scale $\var$, see \cite{ADH}, \cite{BPP},  \cite{CaiDi}, \cite{GauSi}, \cite{CharSi}, \cite{MurSi}, \cite{ParSi}, \cite{Sili}, \cite{Sili2}, \cite{Zhik}. Other settings have been studied in \cite{AllCap}, \cite{Kes}, \cite{KeSab}, \cite{MelNa}. \par
\noindent Of course, this rule also fits for spectral problems, see for instance \cite{Zhik},  \cite{LeNaTa}, \cite{Sili0}. 

\medskip

To describe the behavior of the eigenvalues of (\ref{newstrongformulationS'}) ( $3d-1d$ problem) or  (\ref{newstrongformulationH}) (homogenization) we use the variational formulation.    Note that for a fixed $\var$, $A_\var$ defined either by (\ref{newstrongformulationH}) or by (\ref{newstrongformulationS'}) is a bounded selfadjoint operator with compact resolvent so that one can state the following well known result.
\begin{Proposition} 
  Problem (\ref{newstrongformulationH}) (or problem (\ref{newstrongformulationS'})) admits a sequence of eigenvalues $(\lambda_\var^k)_k, \, \, 0 < \lambda_\var^1 \leq \lambda_\var^2 \leq ... \leq  \lambda_\var^n \leq ...,$ with $ \display  \lim_{k \to \infty} \lambda_\var^k = + \infty$ while the associate eigenvectors $(u_\var^k)_k$ may be chosen as an orthonormal basis of  $L^2(\Omega)$.
 \end{Proposition}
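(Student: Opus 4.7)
The statement is the standard spectral decomposition for the inverse of an elliptic operator with compact resolvent, so the plan is to reduce it to the Hilbert--Schmidt theorem via Lax--Milgram. I will work with problem (\ref{newstrongformulationH}); the adaptation to (\ref{newstrongformulationS'}) is identical, with the role of $V_h$ played by $V_s$ and the coefficients permuted accordingly.

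First I would introduce the bilinear form $a_\var : V_h \times V_h \to \mathbb{R}$ associated to $A_\var$, namely
\begin{equation*}
a_\var(u,v) = \var^{2}\int_{F_\var} \nabla u \cdot \nabla v \, dx + \int_{M_\var}\nabla u \cdot \nabla v \, dx,
\end{equation*}
and check that it is symmetric, bilinear, continuous on $V_h$, and coercive: because every $u \in V_h$ vanishes on the upper and lower faces of $\Omega$, Poincar\'e's inequality in the $x_3$-direction yields $\|u\|_{L^2(\Omega)}^2 \leq K \|\partial_{x_3} u\|_{L^2(\Omega)}^2$, and together with the $\var^2$-weighted fiber contribution this gives a lower bound $a_\var(u,u) \geq c_\var \|u\|_{H^1(\Omega)}^2$ with a constant $c_\var>0$ that may degenerate as $\var \to 0$ but is harmless for fixed $\var$. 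A formal integration by parts against $v \in V_h$ shows that the transmission condition $\frac{\partial u}{\partial n}\chi_{\partial F_\var} = -\var^{-2}\frac{\partial u}{\partial n}\chi_{\partial M_\var}$ appearing in $D(A_\var)$ is exactly the natural interface condition attached to $a_\var$, so that for $u \in D(A_\var)$ one has $a_\var(u,v) = \int_\Omega (A_\var u)\, v\, dx$ for every $v \in V_h$.

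Next, for any $f \in L^2(\Omega)$, Lax--Milgram applied to $a_\var(u,v) = \int_\Omega f v\, dx$ produces a unique $u \in V_h$, and a standard elliptic regularity / bootstrap argument inside each phase shows that in fact $u \in D(A_\var)$ with $A_\var u = f$. Define $T_\var := A_\var^{-1} : L^2(\Omega) \to V_h \hookrightarrow L^2(\Omega)$. Symmetry of $a_\var$ together with the identity above shows that $T_\var$ is self-adjoint on $L^2(\Omega)$, and taking $f = A_\var u$ with $v = u$ gives $(T_\var f, f)_{L^2} = a_\var(u,u) > 0$ for $f \neq 0$, so $T_\var$ is positive.

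The compactness of $T_\var$ is then obtained from the Rellich--Kondrachov compact embedding $H^1(\Omega)\hookrightarrow L^2(\Omega)$, since $V_h \subset H^1(\Omega)$ with continuous injection controlled by $a_\var$. Applying the spectral theorem for positive, compact, self-adjoint operators on the separable Hilbert space $L^2(\Omega)$ yields a sequence $\mu_\var^1 \geq \mu_\var^2 \geq \cdots >0$ with $\mu_\var^k \to 0$ and a corresponding orthonormal basis of eigenvectors $(u_\var^k)_k$. Setting $\lambda_\var^k := 1/\mu_\var^k$ gives the stated eigenvalues of $A_\var$, which form a non-decreasing sequence tending to $+\infty$, with the same basis of eigenvectors.

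The main obstacle (a minor one) is the verification that the flux transmission condition of $D(A_\var)$ is correctly encoded as the natural boundary condition of $a_\var$; once this identification is made, everything else is an application of textbook spectral theory, so I would not dwell on it beyond the integration by parts step.
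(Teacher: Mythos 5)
Your proposal is correct and follows exactly the route the paper has in mind: the paper offers no proof, simply noting that $A_\var$ is self-adjoint with compact resolvent and calling the result well known, and your Lax--Milgram/Rellich--Kondrachov/spectral-theorem argument (with coercivity from the Poincar\'e inequality in the $x_3$-direction and the flux transmission condition identified as the natural condition of the form $a_\var$) is precisely the standard expansion of that remark. Nothing is missing.
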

Taking into account this result, the variational formulation of (\ref{newstrongformulationH}) and of (\ref{newstrongformulationS'}) are respectively the following ones
  \begin{equation} \label{FV1} \left\{\begin{array}{ll}  u_\var^k \in V_h, \\\\  \displaystyle \int_\Omega ( \var^2  \nabla u_\var^k \nabla \phi \chi_{F_\var} +   \nabla u_\var^k \nabla \phi  \chi_{M_\var} \bigr) dx   = \lambda_\var^k \display \int_\Omega u_\var^k \phi \ dx, \\\\ \forall \, \phi \in V_h,   
  \end{array} \right. \end{equation}
  \begin{equation} \label{FV2} \left\{\begin{array}{ll}  u_\var^k \in V_s, \\\\  \displaystyle \int_\Omega \left( \left(\nabla' u_\var^k \nabla' \phi + \var^2 \frac{\partial u_\var^k}{\partial x_3} \frac{\partial \phi}{\partial x_3}\right) \chi_{F} +   \left(\frac{1}{\var^2}\nabla' u_\var^k \nabla' \phi +  \frac{\partial u_\var^k}{\partial x_3} \frac{\partial \phi}{\partial x_3}\right) \chi_{M} \right) dy \ dx_3    \\\\ = \lambda_\var^k \display \int_\Omega u_\var^k \phi \ dy \ dx_3, \, \, \,  \forall \, \phi \in V_s,  
  \end{array} \right. \end{equation}
 where $F:= D \times (0,L)$ and $M:= (C\setminus \overline D) \times (0,L)$. \par
 
 We prove in Theorem 1.3 below that for each $k$, the limit $\lambda_k$ of the sequence of eigenvalues $(\lambda_\var^k)_\var$ of (\ref{newstrongformulationS'}) ( $3d-1d$ problem) is either equal to the first eigenvalue $\mu_1$ of the bidimensional Laplacian in the disk $D$ with homogeneous Dirichlet boundary condition or is on the left of $\mu_1$; furthermore, if $\lambda_k$ fulfills  $0< \lambda_k < \mu_1$,  then $\display s(\lambda_k):= \lambda_k \Bigl( 1 + \frac{\vert D \vert }{\vert C \setminus D\vert } + \frac{ \lambda_k}{\vert C \setminus D \vert} \displaystyle \int_{D} u^k_0 \ dy \Bigr)$ is an eigenvalue of $\display -\frac{d^2}{dx^2_3}$ in $(0,L)$ with homogeneous Dirichlet boundary condition; more precisely, $\lambda_k$ is a solution of the following coupled system 
 
 \begin{equation} \label{FF2} \left\{\begin{array}{ll}  u^k_0(y) \in H^1(C),\quad
 \displaystyle - \Delta'_y u^k_0  = \lambda_k u^k_0 + 1\,\, \, \,  \hbox{in} \,\,  D,  \\\\
  u^k_0 = 0 \quad   \hbox{on} \, \, \, \partial D, 
   \\\\
  v_k \in  H^1_0(0, L)),  \quad \display - \frac{d^2 v_k}{dx_3^2} = \lambda_k \Bigl( 1 + \frac{\vert D \vert }{\vert C \setminus D\vert } + \frac{ \lambda_k}{\vert C \setminus D \vert} \displaystyle \int_{D} u^k_0 \ dy \Bigr) v_k \quad \hbox{in} \, \, \,  (0,L).
  \end{array} \right. \end{equation}
 
 Similar results (Theorem 1.5) are obtained for the homogenization problem; the limit $\lambda_k$ of the sequence of eigenvalues $(\lambda_\var^k)_{\var}$ of (\ref{newstrongformulationH}) is either equal to the first eigenvalue $\mu_1$ of the bidimensional Laplacian in the disk $D$ with homogeneous  Dirichlet boundary condition or is on the left of $\mu_1$ and satisfies the coupled system 
 \begin{equation} \label{FF1} \left\{\begin{array}{ll}  u^k_0 \in  H^1_{\#}(C),\quad
 \displaystyle - \Delta'_y u^k_0  = \lambda_k u^k_0 + 1\,\, \, \,  \hbox{in} \,\, D,  \\\\
   u^k_0 = 0 \, \, \,   \hbox{on} \, \, \partial D, 
   \\\\
 v_k \in L^2(\omega; H^1_0(0, L)),  \\\\
  \display - \frac{\partial^2 v_k}{\partial x_3^2} = \lambda_k \Bigl( 1 + \frac{\vert D \vert }{\vert C \setminus D\vert } + \frac{ \lambda_k}{\vert C \setminus D \vert} \displaystyle \int_{D} u^k_0 \ dy \Bigr) v_k \quad \hbox{in} \, \, \,  \Omega,
  \end{array} \right. \end{equation}
	where $H^1_\# (C)$ denotes the space of $C-$periodic functions in $H^1(C)$. 
	
	\medskip
 
 Let us notice the very close analogy between the two limit problems. Roughly speaking, the vibrations at the limit are those of the string $(0,L)$ modeled by the equation on $v$, which nevertheless contains a memory term $ \display \int_D u_0(y) dy$ inherited from the initial vibrations in the horizontal directions of the soft part of the medium (the fibers) giving rise to a nonlocal system. The first equation of  (\ref{FF1}) is exactly the first one in (\ref{FF2}) (the boundary condition $\display  u^k_0 = 0 \, \, \,   \hbox{on} \, \, \partial D$ 
 allows to extend $u_0^k$ by zero so that it may be viewed as an element of $H^1_{\#}(C)$ as in (\ref{FF1})); hence the only difference between (\ref{FF1})  and (\ref{FF2}) lies in  $v_k$ arising in (\ref{FF1}) is a function depending also on the variable $x' \in \omega$ while in (\ref{FF2}) it depends only on the vertical variable $x_3 \in (0,L)$.  The latter simply means that  the homogenized model is a duplication through the horizontal plane $\omega$ of the phenomenon occurring in each cell. \par
 Note the contrast with limit problems obtained with uniformly bounded operators with respect to the small parameter which are generally of the same nature as the original ones, see for instance  \cite{Kes}, \cite{KeSab}, \cite{Van}. 

\medskip

Note also that the existence and the uniqueness of $u^k_0 $  in (\ref{FF2}) is ensured by the fact that $ \lambda_k$ belongs to the resolvent $ \rho( -\Delta'_y)$ of $ \display - \Delta'_y$ since  
$ \lambda_k < \mu_1$. \par
The fact that $ \display \int_{D} u^k_0 \ dy \not= 0$ will be proved in section 2, see (\ref{lowerboundlast}), using the constant $\mu_1^{-1}$  in the Poincar\'e inequality.

  \begin{Remark}
   It is natural to ask what is the relationship between the problem (\ref{FF2}) (or the problem (\ref{FF1})) and the classical formulation of eigenvalue problems. In fact, (\ref{FF2}) is derived from the system (\ref{FF2bis}) which in turn is derived from the equation (\ref{FV2ter}) satisfied by the pair $(u_k,v_k)$, see the details of the proof in section 2 below. If one integrates the first equation of (\ref{FF2bis}) over $D$, we get an equivalent formulation of (\ref{FF2bis}) as follows
   \begin{equation} \label{LimitOperator} \left\{\begin{array}{ll}  u_k(y,x_3) \in L^2((0,L); H^1(C)),\quad
 \displaystyle - \Delta'_y u_k (y,x_3) = \lambda_k u_k \,\, \, \,  \hbox{in} \,\,  D \times (0, L),  \\\\
  u_k = v_k \quad   \hbox{on} \, \, \, \partial D \times (0, L), 
   \\\\
        
   v_k \in  H^1_0(0, L),  \quad \display - \frac{d^2 v_k}{dx_3^2} +\frac{1}{\vert C \setminus D \vert} \displaystyle \int_{\partial D} \frac{\partial u_k}{\partial n}\ d\sigma = \lambda_k v_k  \quad \hbox{in} \, \, \,  (0,L).
  \end{array} \right. \end{equation}
  Another equivalent formulation of (\ref{LimitOperator}) is the following 
  \begin{equation}\label{LimitOperatorbis}
 A_0 \begin{pmatrix} u_k \\ v_k \end{pmatrix} = \lambda_k \begin{pmatrix} u_k \\ v_k \end{pmatrix}
  \end{equation}
  where the operator $A_0$ is defined by $ \display A_0: D(A_0) \to H:=  L^2(\Omega) \times L^2(0, L)$ with
  \begin{equation}\label{LimitOperatorter} \left\{\begin{array}{ll}
  \display D(A_0) = 
  \left\{ \begin{pmatrix} u \\ v \end{pmatrix} \in L^2(0,L; H^1(D) \times H^1_0(0,L); \, A_0 \begin{pmatrix} u \\ v \end{pmatrix} \in H,  \,  u = v \, \, \hbox{on} \, \, \partial D \right\}, \\\\
  
    A_0 \begin{pmatrix} u \\ v \end{pmatrix} = \display \begin{pmatrix} -\Delta'_y u \\ - \display \frac{d^2 v}{dx_3^2} +\frac{1}{\vert C \setminus D \vert} \displaystyle \int_{\partial D} \frac{\partial u}{\partial n}\ d\sigma \end{pmatrix},  \qquad \forall \, \begin{pmatrix} u \\ v \end{pmatrix}  \in D(A_0).
\end{array} \right.   \end{equation}

   We see from (\ref{LimitOperatorbis}) and (\ref{LimitOperatorter}) the sharp difference between the bounded selfadjoint operator $A_\var$ and the limit operator $A_0$  which is no more a bounded selfadjoint operator.  
	
	\medskip
	
  Of course,  the same remark may be made about  the homogenized problem given by (\ref{FF1}). 
	\end{Remark}
  From the technical point of view the main difficulty in the asymptotic analysis comes from the lack of compactness since we have to consider sequences of eigenvectors not bounded in $H^1(\Omega)$ so that the strong convergence in $L^2(\Omega)$ (or strong two-scale convergence in the case of homogenization) which allows to conclude the limit of an eigenvector $u_\var^k$ is still an eigenvector (i.e. $\not=0$) is not straightforward. To overcome this difficulty, we will use an extension technique (see \cite{CioPau}, \cite{Zikool}) combined with another slightly more intricate argument. 
	
	\medskip
	
  From now on and based on the previous comments, we will focus on the asymptotic analysis of the singular perturbation problem (\ref{FV2}) (the study of the reduction of dimension occurring in each cell). This kind of problems is usually encountered in the study of thin structures, see for instance \cite{Led} and \cite{Pan}. 
	
	\medskip
	
	Our main results may be stated as follows.
  
  \begin{Theorem}\label{3d-1d} 
  For each $k=1,2,...$, the sequence of eigenvalues $(\lambda_\var^k)_\var$ of (\ref{FV2}) is bounded above by the first eigenvalue $\mu_1$ of $-\Delta'$ in $H_0^1(D)$ and the associated sequence of eigenvectors $(u_\var^k)_\var$  is bounded in $L^2(0,L; H^1(C))$; 
  if for a subsequence of $\var$, $\display \lambda_\var^k \to \lambda_k $ with $ \lambda_k \not= \mu_1$, then there exists a solution  $(\lambda_k, u_0^k, v_k) \in (0, \mu_1 [ \times L^2(0,L; H^1(C)) \times H_0^1(0,L)$ of (\ref{FF2}) with $v_k \not= 0$ such that for the whole sequence $\var$, one has
   \begin{equation}\label{CV0}  
\lambda_\var^k \to \lambda_k,
  \end{equation}
   \begin{equation}\label{CV1}  
  u_\var^k \scvge u_k(y,x_3):= (\lambda_k u_0^k + 1) v_k  \, \, \hbox{strongly in} \, \,  L^2(0,L; H^1(C)), 
  \end{equation}
  
   \begin{equation}\label{CV1bis}  
  u_\var^k \chi_M \scvge  v_k \chi_M \, \, \hbox{strongly in} \, \,  L^2(C; H_0^1(0,L)).  
   \end{equation}
  Any  $\lambda_k$ such that $0 < \lambda_k < \mu_1$ is a simple eigenvalue of the limit operator $A_0$. \par
  Conversely, problem  (\ref{FF2}) admits non trivial solutions such that $0 < \lambda_k <  \mu_1$ and any $\lambda \in (0, \mu_1[ $ which is an eigenvalue of (\ref{FF2}) is a limit of a sequence $(\lambda_\var^k)_\var$ of eigenvalues of (\ref{FV2}). \par
 The unique accumulation point of the sequence $(\lambda_k)_k$ is the first eigenvalue $\mu_1$ of $-\Delta'_y$; hence $\display \lim_{k \to +\infty} \lambda_k = \mu_1$.
 \end{Theorem}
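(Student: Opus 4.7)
My plan combines a min-max upper bound, weak compactness in $L^2(0,L;H^1(C))$, a Cioranescu--Saint Jean Paulin extension on the matrix part, careful passage to the limit with specifically chosen test functions, and a dispersion analysis of the coupled system (\ref{FF2}).

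\textbf{A priori estimates.} Testing (\ref{FV2}) against the normalised $u_\varepsilon^k$ yields the energy identity
\[
\int_F |\nabla'u_\varepsilon^k|^2 + \varepsilon^{-2}\!\int_M |\nabla'u_\varepsilon^k|^2 + \varepsilon^2\!\int_F |\partial_{x_3}u_\varepsilon^k|^2 + \int_M |\partial_{x_3}u_\varepsilon^k|^2 = \lambda_\varepsilon^k.
\]
Using the $k$-dimensional min-max space spanned by $\phi_1(y)\sin(j\pi x_3/L)$, $j=1,\ldots,k$, with $\phi_1\in H^1_0(D)$ the first Dirichlet eigenfunction of $-\Delta'_y$ on $D$ extended by zero to $M$, only the $\chi_F$ part of the quadratic form contributes, so $\lambda_\varepsilon^k\le\mu_1+O(\varepsilon^2)$. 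The identity then bounds $u_\varepsilon^k$ in $L^2(0,L;H^1(C))$, $\partial_{x_3}u_\varepsilon^k$ in $L^2(M)$, together with the scaled quantities $\varepsilon^{-1}\nabla'u_\varepsilon^k\chi_M$ and $\varepsilon\,\partial_{x_3}u_\varepsilon^k\chi_F$ in $L^2(\Omega)$.

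\textbf{Weak limits and identification of the system.} Extract $\lambda_\varepsilon^k\to\lambda_k$ and $u_\varepsilon^k\weak u_k$ in $L^2(0,L;H^1(C))$. The bound on $\varepsilon^{-1}\nabla'u_\varepsilon^k\chi_M$ forces $\nabla'_y u_k=0$ on $M$, and connectedness of $M'=C\setminus\overline D$ gives $u_k(y,x_3)=v_k(x_3)$ on $M$ with $v_k\in H^1_0(0,L)$ from the trace conditions in $V_s$. Assuming $\lambda_k\ne\mu_1$, I would extract the limit equations by two choices of test function in (\ref{FV2}): first $\phi(y,x_3)=\alpha(y)\beta(x_3)$ with $\alpha\in H^1_0(D)$ extended by zero and $\beta\in C^\infty_c(0,L)$, which annihilates the $\chi_M$ term and yields $-\Delta'_y u_k=\lambda_k u_k$ in $D\times(0,L)$. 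Continuity of $u_k(\cdot,x_3)$ in $y$ across $\partial D$ then gives $u_k|_{\partial D}=v_k$, and writing $u_k=(\lambda_k u_0^k+1)v_k$ on $D$ identifies $u_0^k$ as the unique $H^1_0(D)$-solution of $-\Delta'_y u_0^k=\lambda_k u_0^k+1$, well posed since $\lambda_k\in\rho(-\Delta'_y)$. Second, the test function $\phi=\beta(x_3)$ kills every $\nabla'$ term and, after substituting the ansatz on $D$ and dividing by $|C\setminus D|$, produces the one-dimensional equation $-v_k''=\lambda_k s(\lambda_k)v_k$ of (\ref{FF2}).

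\textbf{Strong convergence and non-triviality --- the main obstacle.} Because $\nabla u_\varepsilon^k$ is not bounded in $L^2(\Omega)$, weak convergence does not rule out $u_k\equiv 0$. Following the hint in the introduction I would apply a Cioranescu--Saint Jean Paulin-type extension to $u_\varepsilon^k|_M$ to produce a sequence bounded in $H^1(\Omega)$; Rellich then yields strong $L^2$-convergence on $M$ to $v_k$. On the fibre side, strong convergence of the traces on $\partial D\times(0,L)$ together with elliptic regularity for the $\varepsilon$-dependent boundary value problem satisfied by $u_\varepsilon^k$ on $D$ propagates strong $L^2$-convergence inside $F$ to $(\lambda_k u_0^k+1)v_k$. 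Passing to the limit in $\|u_\varepsilon^k\|_{L^2(\Omega)}=1$ gives $\|u_k\|_{L^2(\Omega)}=1$, hence $v_k\not\equiv 0$ (otherwise $u_k\equiv 0$). The Poincar\'e inequality with sharp constant $\mu_1^{-1}$ enters here to guarantee $\int_D u_0^k\ne 0$, so that $s(\lambda_k)$ is well defined.

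\textbf{Dispersion, simplicity, converse, and accumulation.} The function $\lambda\mapsto\lambda s(\lambda)$ is continuous and strictly increasing on $(0,\mu_1)$, tending to $+\infty$ as $\lambda\to\mu_1^-$ (the projection of $u_0(\lambda)$ onto $\phi_1$ blows up via the spectral representation of the resolvent). Hence for every $n\ge 1$ the equation $\lambda s(\lambda)=n^2\pi^2/L^2$ has a unique root $\lambda^{(n)}\in(0,\mu_1)$ with $\lambda^{(n)}\nearrow\mu_1$; these constitute exactly the eigenvalues of $A_0$ below $\mu_1$, each simple since $u_0^k$ and $v_k$ are determined up to scalar. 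Simplicity also upgrades subsequential convergence to convergence of the whole sequence, giving (\ref{CV0})--(\ref{CV1bis}). For the converse I would construct a quasi-mode $U_\varepsilon(y,x_3)=(\lambda^{(n)}u_0^n(y)+1)v_n(x_3)$ modified by a thin boundary-layer corrector near $\partial D\times(0,L)$ so that $U_\varepsilon\in D(A_\varepsilon)$, verify that $\|A_\varepsilon U_\varepsilon-\lambda^{(n)}U_\varepsilon\|_{L^2}\to 0$, and deduce via min-max that $\lambda^{(n)}$ is a limit of some $\lambda_\varepsilon^k$.
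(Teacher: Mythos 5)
Your overall architecture matches the paper's (same min-max test space $\phi_1\otimes v^j$ for the bound $\lambda_\varepsilon^k\le\mu_1+O(\varepsilon^2)$, same weak compactness and identification of the coupled nonlocal system, same dispersion function $\delta(\lambda)$ for existence, simplicity and accumulation at $\mu_1$), but there is a genuine gap at the step the paper itself singles out as the main difficulty: strong $L^2$ convergence \emph{inside the fibre}. On $F$ you only control $\nabla'u_\varepsilon^k$ and $\varepsilon\,\partial_{x_3}u_\varepsilon^k$ in $L^2$, so there is no compactness in the vertical direction, and the operator $-\Delta'_y-\varepsilon^2\partial^2_{x_3}$ degenerates as $\varepsilon\to0$; consequently the phrase ``strong convergence of the traces on $\partial D\times(0,L)$ together with elliptic regularity for the $\varepsilon$-dependent boundary value problem propagates strong $L^2$-convergence inside $F$'' is not a proof: no uniform-in-$\varepsilon$ resolvent/regularity estimate from $L^2$ boundary data to $L^2(F)$ is available for this degenerate family. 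If you try the natural route of lifting the boundary data by the $H^1(\Omega)$-bounded extension $U_\varepsilon^k$, the remainder $z_\varepsilon^k=u_\varepsilon^k-U_\varepsilon^k$ satisfies an equation in $F$ whose right-hand side contains $\Delta'_yU_\varepsilon^k$, which is not controlled. The paper resolves this by (i) modifying the extension so that it solves the homogeneous degenerate equation (\ref{equationontheextension}) in $F$, which yields the clean equation (\ref{equationsurzepsilon}) for $z_\varepsilon^k$, and (ii) a duality argument (Lemma 2.6 applied to the auxiliary problems (\ref{equationsgenerales}) with the choice $f_\varepsilon=z_\varepsilon^k$) giving convergence of $\|z_\varepsilon^k\|_{L^2(F)}$, hence strong convergence; the spectral gap $\lambda_k<\delta<\mu_1$ and the Poincar\'e inequality with constant $\mu_1^{-1}$ are what make these estimates uniform. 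Some argument of this type (or an equivalent norm-convergence argument for the coercive form $\int_F|\nabla'z|^2-\lambda_k\int_F z^2$) is indispensable and is missing from your plan.

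Two further points. First, your converse is proposed via a quasi-mode, which is a genuinely different route from the paper's contradiction argument (uniform distance to $\sigma(A_\varepsilon)$, resolvent equation with $f=g(x_3)\chi_{C\setminus D}$, leading to the contradiction that $\delta(\lambda)$ lies in the resolvent of $-d^2/dx_3^2$). The quasi-mode route is plausible but left unexecuted at its delicate point: the profile $(\lambda u_0+1)v$ in $D$, $v$ in $M'$ violates the transmission condition of $D(A_\varepsilon)$, and the matrix corrector must be an $O(\varepsilon^2)$ function whose horizontal Laplacian is the \emph{constant} fixed by the flux balance $\int_{\partial D}\partial_n u_0\,d\sigma=-\lambda\int_Du_0-|D|$ (this is exactly where the nonlocal term, i.e. the second equation of (\ref{FF2}), is needed for the residual to be small); without carrying this out, and restoring continuity across $\partial D$ without destroying the flux matching, the claim $\|A_\varepsilon U_\varepsilon-\lambda U_\varepsilon\|_{L^2}\to0$ is unproved. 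The paper's resolvent argument avoids all of this. Second, the convergences (\ref{CV1}) and (\ref{CV1bis}) are in the gradient norms $L^2(0,L;H^1(C))$ and $L^2(C;H^1_0(0,L))$; your plan only produces strong $L^2(\Omega)$ convergence, whereas the paper obtains the stronger statements from the convergence of energies ($J_\varepsilon\to0$), a step you should add.
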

 
\begin{Remark}The property $v_k \not= 0$ may be deduced from the strong convergence (\ref{CV1}) of the eigenvectors but we prefer to write it explicitly to highlight the fact that $v_k$ is always an eigenvector of $ \display - \frac{d^2}{dx_3^2}$ with Dirichlet boundary condition.
 \end{Remark}

 Regarding the homogenization problem, the result is in all respects similar to that of $3d-1d$. We state it through the following theorem which is the homogenized version of Theorem \ref{3d-1d}. To state the results, we need the use of the two-scale convergence, see \cite{All}, \cite{Nguet}, \cite{Zhik}. We use  the notation 
  $\tscale$ (resp. $\overset{2-sc} {\longrightarrow}$) for the two-scale convergence (resp. the strong two-scale convergence).
  \medskip
  \begin{Theorem}\label{homogenization}
 For each $k=1,2,...$, the sequence of eigenvalues $(\lambda_\var^k)_\var$ of (\ref{FV1}) is bounded above by the first eigenvalue $\mu_1$ of $-\Delta'$ in $H_0^1(D)$ and the associated sequence of eigenvectors $(u_\var^k)_\var$  is bounded in $L^2(0,L; H^1(\omega))$;
 if for a subsequence of $\var$, $\display \lambda_\var^k \to \lambda_k $ with $ \lambda_k \not= \mu_1$, then there exists a solution  $(\lambda_k, u_0^k, v_k) \in (\mu_0, \mu_1[ \times L^2(0,L; H_{\#}^1(C)) \times L^2(\omega; H_0^1(0,L))$ of (\ref{FF1}) with $v_k \not= 0$ such that for the whole sequence $\var$, one has
   \begin{equation}\label{CV0bis}  
\lambda_\var^k \to \lambda_k,
  \end{equation}
 \begin{equation}\label{CV2}  
    u_\var^k \overset{2-sc}{\longrightarrow} u_k(x,y):= (\lambda_k u_0^k + 1) v_k, \end{equation} 
 with the following corrector result 
  \begin{equation}\label{Corrector}  
  \display \int_\Omega \left( \left( \left| \var \nabla'u_\var^k - \nabla'_y u_k \left(x, \frac{x'}{\var}\right) \right|^2  + \var^2 \left| \frac{\partial u_\var^k}{\partial x_3}\right|^2 \right)\chi_{F_\var}(x') + \left( \left| \nabla'u_\var^k \right|^2  + \left| \frac{\partial u_\var^k}{\partial x_3} - \frac{\partial v_k}{\partial x_3}\right|^2\right) \chi_{M_\var}(x') \right) \ dx \to 0.   \end{equation} 
   Any  $\lambda_k$ such that $0 < \lambda_k < \mu_1$ is a simple eigenvalue of the limit operator $A_0$. \par
  Conversely, any eigenvalue $\lambda \in (0, \mu_1[$ of problem (\ref{FF1}) is a limit of a sequence $(\lambda_\var^k)_\var$ of eigenvalues of (\ref{FV1}). \par
    The sequence $(\lambda_k)_k$ converges to $\mu_1$. 
\end{Theorem}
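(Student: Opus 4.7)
The plan is to mirror the argument sketched for Theorem~\ref{3d-1d}, with two-scale convergence replacing the one-variable $L^2(0,L;H^1(C))$ framework. I would begin with the upper bound $\limsup_{\var}\lambda_\var^k\le\mu_1$ via Courant--Fischer applied to the $k$-dimensional subspace of $V_h$ spanned by $\Phi_j^\var(x):=\phi_1(x'/\var)\psi_j(x_3)$, $j=1,\dots,k$, where $\phi_1$ is the first Dirichlet eigenfunction of $-\Delta'$ on $D$ extended by zero outside $D$ then $C$-periodically to $\omega$, and $(\psi_j)$ is $L^2$-orthonormal in $H_0^1(0,L)$. Since the $\Phi_j^\var$ are supported in $F_\var$, the prefactor $\var^2$ exactly cancels the $\var^{-2}$ coming from $|\nabla'\Phi_j^\var|^2$, and each Rayleigh quotient equals $\mu_1+O(\var^2)$. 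Taking $\phi=u_\var^k$ in (\ref{FV1}) together with $\|u_\var^k\|_{L^2(\Omega)}=1$ then yields
\[
\var^{2}\!\int_{F_\var}|\nabla u_\var^k|^{2}\,dx+\int_{M_\var}|\nabla u_\var^k|^{2}\,dx=\lambda_\var^k\le\mu_1+O(\var^2),
\]
so $u_\var^k$ is uniformly bounded in $L^2(\Omega)$ and in $H^1(M_\var)$, while $\var\nabla u_\var^k$ is bounded in $L^2(F_\var)$; this is exactly what is needed for two-scale compactness.

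Next, applying in each horizontal slice $\omega\times\{x_3\}$ a Cioranescu--Saint Jean Paulin type extension, $u_\var^k|_{M_\var}$ extends to $\tilde u_\var^k\in H^1(\Omega)$ with $\|\tilde u_\var^k\|_{H^1(\Omega)}\le K$. Up to a subsequence, $\tilde u_\var^k\weak v_k$ in $H^1(\Omega)$ with $v_k\in L^2(\omega;H_0^1(0,L))$, and $u_\var^k\tscale u_k(x,y)$ with $u_k\in L^2(\Omega;H_\#^1(C))$ agreeing with $v_k(x)$ on $\Omega\times(C\setminus\overline D)$. Passing to the limit in (\ref{FV1}) against oscillating test functions $\phi_\var(x)=\phi(x)+\var\psi(x,x'/\var)$ with $\phi\in C_c^\infty(\Omega)\cap V_h$ and $\psi$ smooth and $C$-periodic in $y$, identifies the microscopic equation $-\Delta'_y u_k=\lambda_k u_k$ in $D$ with trace $u_k=v_k$ on $\partial D$. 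Since $\lambda_k\ne\mu_1$, the separation $u_k(x,y)=v_k(x)(\lambda_k u_0^k(y)+1)$ is well defined and $u_0^k$ solves the first line of (\ref{FF1}). Testing with $\phi$ constant in $y$, the contribution from $F_\var$ produces the averaged term $\int_D u_k(\cdot,y)\,dy$, which through the above separation yields the macroscopic equation on $v_k$ with the memory coefficient $1+\frac{|D|}{|C\setminus D|}+\frac{\lambda_k}{|C\setminus D|}\int_D u_0^k\,dy$.

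The hardest step, as emphasized in the introduction, is to exclude the degenerate case $v_k\equiv 0$: the $L^2$-normalization of $u_\var^k$ could in principle concentrate entirely inside the fibers, where the operator is soft. Here I would combine the extension bound above with the ``slightly more intricate'' argument announced by the authors, using inequality~(\ref{lowerboundlast}) and the Poincar\'e constant $\mu_1^{-1}$ on $D$ to show that the strict inequality $\lambda_k<\mu_1$ forces $\int_D u_0^k\,dy\neq 0$, which rules out concentration and simultaneously produces the corrector~(\ref{Corrector}) by matching the energy identity
\[
\lambda_\var^k=\var^2\|\nabla u_\var^k\|^2_{L^2(F_\var)}+\|\nabla u_\var^k\|^2_{L^2(M_\var)}
\]
with its two-scale limit computed from $u_k$ and $v_k$. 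Simplicity of $\lambda_k\in(0,\mu_1)$ follows from the rigid decomposition $u_k=v_k(\lambda_k u_0^k+1)$ together with the one-dimensional structure of the $v_k$-equation. The converse direction is obtained by reversing the two-scale expansion to build a recovery sequence in $V_h$ for any eigenpair of (\ref{FF1}). Finally, $\lambda_k\to\mu_1$ follows because $(\lambda_k)$ is nondecreasing, bounded above by $\mu_1$, and cannot cluster strictly below $\mu_1$ (any such cluster point would be an eigenvalue of $A_0$ of infinite multiplicity, contradicting the simplicity just established).
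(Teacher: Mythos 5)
Your overall strategy (min--max upper bound with fiber-supported test functions $\phi_1(x'/\var)\psi_j(x_3)$, a priori bounds from $\phi=u_\var^k$, extension of $u_\var^k|_{M_\var}$, two-scale identification of the coupled system, energy matching for the corrector) is exactly the route the paper takes, which itself reduces everything except the min--max computation to the $3d$--$1d$ case. But there is a genuine gap at the step you yourself identify as the hardest one: ruling out $v_k\equiv 0$. You propose to do this by showing $\int_D u_0^k\,dy\neq 0$ via (\ref{lowerboundlast}) and the Poincar\'e constant $\mu_1^{-1}$. That inequality concerns the \emph{structure of the limit system} (it gives the lower bound $\lambda_k\geq\mu_0$ and the positivity of the memory coefficient); it says nothing about whether the normalized eigenvectors $u_\var^k$ lose their mass in the limit. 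The non-degeneracy of the limit requires proving \emph{strong} (two-scale) $L^2$ convergence of $u_\var^k$, so that $\|u_k\|=1$ passes to the limit; the paper's mechanism for this (Proposition 2.4) is a duality argument: extend $u_\var^k$ from $M_\var$ into the fibers so that the extension $U_\var^k$ solves the homogeneous rescaled equation in $F_\var$, set $z_\var^k=u_\var^k-U_\var^k$, introduce the auxiliary problems (\ref{equationsgenerales})--(\ref{equationsurw}) with data $f_\var$, and choose $f_\var=z_\var^k$ to obtain convergence of the $L^2$ norm, $\int_F (z_\var^k)^2\to\int_F (z_k)^2$. This uses crucially that $\lambda_k<\mu_1$ keeps $\lambda_\var^k$ in the resolvent set of the fiber operator. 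Your proposal never carries out this step (you only invoke ``the slightly more intricate argument announced by the authors''), and without it the theorem's conclusions (\ref{CV2}) and (\ref{Corrector}) and the claim $v_k\neq 0$ are unsupported.

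Two further points are weaker than the paper's treatment. For the converse inclusion you suggest building a recovery sequence by reversing the two-scale expansion; this is a legitimate alternative in principle, but it requires a quantitative quasimode estimate that you do not provide, whereas the paper argues by contradiction: if $\lambda$ were uniformly at distance $\kappa$ from $\sigma(A_\var)$, the uniformly bounded resolvents applied to $f=g(x_3)\chi_{C\setminus D}$ would pass to the limit and exhibit an eigenvalue of $-d^2/dx_3^2$ inside its resolvent set. Finally, your argument that $(\lambda_k)$ cannot cluster below $\mu_1$ because ``a cluster point would be an eigenvalue of infinite multiplicity'' is not correct as stated (a cluster point of eigenvalues of $A_0$ need not be an eigenvalue at all, and simplicity of each $\lambda_k$ does not preclude clustering). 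The paper instead shows that a cluster point $\lambda<\mu_1$ would force the images $\delta(\lambda_{k_n})$ to accumulate at a finite value, contradicting the discreteness of the Dirichlet spectrum of $-d^2/dx_3^2$; this uses an $H_0^1(D)$ bound on $u_0^{k_n}$ to pass to the limit in $\int_D u_0^{k_n}\,dy$.
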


  \medskip
  \begin{Remark}
  Note that the structure of the limit spectrum is quite complicated because not only the mean value $ \display \int_{D} u^k_0 \ dy $ arising in the second equation of the limit system must be calculated by the use of the first equation of the system but the function $u^k_0$  itself depends on the corresponding eigenvalue as shown by the first equation; hence, 
  $ \display \lambda_k \Bigl( 1 + \frac{\vert D \vert }{\vert C \setminus D\vert } + \frac{ \lambda_k}{\vert C \setminus D \vert} \displaystyle \int_{D} u^k_0 \ dy \Bigr)$ which is an eigenvalue  of 
  $\display - \frac{d^2}{dx_3^2}$ is not completely known in terms of $\lambda_k$. However, we will prove (see (\ref{secondequation})) that for $ 0 < \lambda_k < \mu_1$, the second equation describing the vibrations of the string $(0,L)$ may be written as
  \begin{equation} \label{vibrationsofthestring}  
 \display - \frac{d^2 v_k}{dx_3^2} = \delta(\lambda_k) v_k \, \hbox{with}  \, \delta(\lambda):= C\lambda + C' \sum_{n=1}^\infty \frac{c^2_n \lambda^2}{\mu_n - \lambda},
 \end{equation} where $C, C'$ denote positive constants and $c_n:= \display \int_D f_n dy$ where $(f_n)_n$ denotes the orthonormal basis in $L^2(D)$ made up of eigenfunctions associated to the increasing sequence $(\mu_n)_n$ of eigenvalues of $-\Delta'_y$ with Dirichlet boundary condition. Of course, the spectrum $\sigma_0$ of the limit operator $A_0$ contains eigenvalues on the right of $\mu_1$; in particular, (\ref{vibrationsofthestring}) shows that any eigenvalue $\mu_n$ of $-\Delta'$ such that $ c_n = \display \int_D f_n dy \not=0$ is an accumulation point of $\sigma_0$. Our result states that the limits $\lambda_k$ make up a part of the spectrum $\sigma_0$ of $A_0$, namely the values of $\sigma_0$ located on the left of $\mu_1$.
 
\medskip

 Remark also that in the homogenization setting, the analogous result of the convergence (\ref{CV1bis}) is the convergence $ \display \int_\Omega \left| \frac{\partial u_\var^k}{\partial x_3} - \frac{\partial v_k}{\partial x_3}\right|^2 \chi_{M_\var}(x') \ dx \to 0$ obtained from the corrector result  (\ref{Corrector}). However, the latter does not mean that the sequence $u_\var^k \chi_{M_\var}$ converges strongly in $L^2(\omega; H_0^1(0,L))$ to $ \frac{\vert C \setminus D \vert}{\vert C \vert}  v_k= \vert C \setminus D \vert v_k$ (we have assumed $\vert C \vert = 1$) in which case this convergence would be the exact analogue of (\ref{CV1bis}). Unfortunately, because of the oscillations induced by the homogenization process, such exact analogue of (\ref{CV1bis}) is false. This is one of the few differences between the $3d-1d$ problem and the homogenization problem.
   \end{Remark}
  \medskip
    Finally we point out other possible scalings of the form $\var^\gamma \chi_{F_\var} + \var^\delta \chi_{M_\var}$ as addressed in \cite{GauSi0}, \cite{GauSi}, \cite{Sili2}. For instance in the static case, one can refer to \cite{GauSi0}. The critical case giving rise to a coupled system at the limit is the one corresponding to $ \display \lim \var^{\delta - 2} = l \in ]0, +\infty[$ which we consider here. 
		
		\medskip
		
  In order to highlight the close analogy between the $3d-1d$ limit problem and the homogenized problem, the macroscopic variable $x$ will be denoted by $x=(y,x_3), \, y \in C$ in the study of the $3d-1d$ problem for which $\Omega:=C\times (0,L)$ while in the homogenization problem $x$ will be denoted by $x=(x',x_3), \, x' \in \omega := \display \bigcup_{i \in I_\var} (\var C + \var i)$ since $ \Omega:= \display \bigcup_{i \in I_\var} (\var C + \var i) \times(0,L)$ so that each $x'\in \omega$ may be written as $x'= \var y + \var i, \, \, i \in I_\var$. In the case of a single thin structure $\Omega_\var = (\var C) \times (0,L)$,  $\Omega:= C \times (0,L)$ is obtained from  $\Omega_\var $ by the scaling $x'=\var y, \, \, y \in C$, making our notations homogeneous.
	
\medskip
  
Before proceeding to prove the results in the next sections, it should be pointed out the study can be extended to the case of operators in divergence form. In that case, we have to take into account at the limit the contribution of the anisotropy of the heavy part of the material (here the matrix) as shown in \cite{Sili}.  On the other hand, one can consider other scalings of the form $\var^\gamma \chi_{F_\var} + \var^\delta \chi_{M_\var}$ as addressed in \cite{GauSi0}, \cite{GauSi}, \cite{Sili2} in the static case. For instance in the static case and under convenient assumptions on the source term, one can consider coefficients of order $\var^\delta$ in the fiber  ${F_\var}$ and $1$ in ${M_\var}$, then loosely speaking the structure of the limit problem depends on the limit of the ratio $ \display \var^{\delta - 2}$, the critical case giving rise to a coupled system at the limit is the one corresponding to $ \display \lim \var^{\delta - 2} = l \in ]0, +\infty[$. Here we address the critical case in the framework of the Laplacian operator for the sake of simplicity and brevity. 

\medskip

  In the following we study in detail the dimension reduction problem and then indicate briefly the few technical changes needed in the proofs of the result in the framework of homogenization, see also \cite{Sili0}.

  \section{Proof of the results in the case of a single thin structure: the reduction of dimension $3d-1d$}
   \subsection{Apriori estimate on the sequence of eigenvalues and eigenvectors}
    \begin{Proposition}\label{propo1}
  For each $k=1,2,...$, the sequence $(\lambda_\var^k, u_\var^k)$ of eigenpairs of (\ref{FV2}) is bounded in $  \mathbb{R} \times L^2(0,L; H^1(C))$. There exist $(\lambda_k, u_k, v_k) \in (0, \mu_1) \times L^2(0,L; H^1(C)) \times H_0^1(0,L)$ and a subsequence of $\var$ still denoted by $\var$ such that
   \begin{equation}\label{CW1} u_\var^k \weak u_k \, \, \hbox{weakly in} \, \, L^2(0,L; H^1(C)) \, \, \hbox{and} \, \, u_k(y,x_3)= v_k(x_3) \, \, \hbox{in}\, \, M = ( C \setminus D) \times(0,L),
   \end{equation}
    \begin{equation}\label{C2} \frac{\partial u_\var^k}{\partial x_3}  \chi_M \weak \frac{dv_k}{dx_3}  \chi_M \quad \hbox{weakly in } \, \, L^2(\Omega),
   \end{equation}
   \begin{equation}\label{C3} \lambda_\var^k \to \lambda_k.
   \end{equation}
  \end{Proposition}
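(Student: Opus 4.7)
The plan is to proceed in four steps. First, to establish the uniform upper bound $\lambda_\var^k \le \mu_1 + O(\var^2)$ I would use the min--max characterization: let $f_1 \in H^1_0(D)$ be the first Dirichlet eigenfunction of $-\Delta'_y$ in $D$ (extended by zero to $C$) and $g_j$ the $j$-th Dirichlet eigenfunction of $-d^2/dx_3^2$ on $(0,L)$; the $k$-dimensional subspace of $V_s$ spanned by $\{f_1(y)\,g_j(x_3)\}_{j=1}^k$ is admissible, and since $f_1$ vanishes on $M' = C\setminus\overline D$ the $\chi_M$ part of the form in (\ref{FV2}) drops out, so the Rayleigh quotient of each $f_1 g_j$ equals exactly $\mu_1 + \var^2(j\pi/L)^2$.

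Second, testing (\ref{FV2}) with $\phi = u_\var^k$, normalized so that $\|u_\var^k\|_{L^2(\Omega)} = 1$, yields the energy identity
\[
\int_F |\nabla' u_\var^k|^2 \,dx + \var^2 \!\int_F |\partial_3 u_\var^k|^2 \,dx + \frac{1}{\var^2}\!\int_M |\nabla' u_\var^k|^2 \,dx + \int_M |\partial_3 u_\var^k|^2 \,dx \;=\; \lambda_\var^k.
\]
Combined with the previous step this gives $\|\nabla' u_\var^k\|_{L^2(M)} = O(\var)$ together with uniform bounds on $\|\nabla' u_\var^k\|_{L^2(F)}$, $\|\partial_3 u_\var^k\|_{L^2(M)}$ and $\var\,\|\partial_3 u_\var^k\|_{L^2(F)}$; in particular $u_\var^k$ is bounded in $L^2(0,L; H^1(C))$.

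Third, I would extract a subsequence with $u_\var^k \weak u_k$ in $L^2(0,L;H^1(C))$ and $\lambda_\var^k \to \lambda_k \in [0,\mu_1]$. Passing the strong convergence $\nabla' u_\var^k \to 0$ in $L^2(M)$ to the limit gives $\nabla'_y u_k = 0$ on $M$; since $M' = C\setminus\overline D$ is connected, $u_k|_M = v_k(x_3)$ depends only on $x_3$. The weak convergence of the bounded sequence $\partial_3 u_\var^k|_M$ yields (\ref{C2}) with $v_k' \in L^2(0,L)$, and the vanishing trace of $u_\var^k$ at $x_3 = 0, L$ encoded in $V_s$ is preserved in the limit, giving $v_k \in H^1_0(0,L)$.

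The hard step is strict positivity $\lambda_k > 0$: direct application of Poincar\'e in the $x_3$ variable to $u_\var^k$ would require a bound on $\partial_3 u_\var^k$ over all of $\Omega$, whereas the energy identity only controls it on $M$. Following the indication in the paper, I would introduce a uniform extension operator $P_\var : H^1(M) \to H^1(\Omega)$ of Cioranescu--Paulin type (see \cite{CioPau}) preserving vanishing Dirichlet traces at $x_3 = 0, L$ and satisfying $\|\nabla P_\var w\|_{L^2(\Omega)} \le C\|\nabla w\|_{L^2(M)}$ uniformly in $\var$; Poincar\'e in $x_3$ applied to $P_\var(u_\var^k|_M)$ then yields $\|u_\var^k\|_{L^2(M)}^2 \le C\,\lambda_\var^k$. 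A trace inequality on $\partial D \times (0,L)$ combined with a Poincar\'e-with-boundary-term estimate on $D$ gives $\|u_\var^k\|_{L^2(F)}^2 \le C'(\|u_\var^k\|_{L^2(\partial D \times (0,L))}^2 + \|\nabla' u_\var^k\|_{L^2(F)}^2) \le C''\,\lambda_\var^k$, so that $1 = \|u_\var^k\|_{L^2(\Omega)}^2 \le C'''\,\lambda_\var^k$ and hence $\lambda_k \ge 1/C''' > 0$.
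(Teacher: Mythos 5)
Your proof is correct, and the first three steps are exactly the paper's: the same min--max test space $\mathrm{span}\{f_1 g_1,\dots,f_1 g_k\}$ with $f_1$ extended by zero (so the $\chi_M$ terms vanish), giving $\lambda_\var^k\le \mu_1+\var^2\lambda_k^0$; the same energy identity from testing with $u_\var^k$; and the same weak-compactness extraction with $\nabla' u_\var^k\chi_M\to 0$ strongly forcing $u_k=v_k(x_3)$ on the connected set $M$. The genuine divergence is your fourth step. The paper's proof of this proposition simply asserts that $\lambda_k\in(0,\mu_1)$ ``by passing to the limit'' in the upper bound, which in fact only yields $\lambda_k\in[0,\mu_1]$; the positive lower bound $\lambda_k\ge\mu_0>0$ is obtained only later (Proposition 2.3), a posteriori, by showing that any limit eigenvalue must make $\hat\lambda_k=\phi(\lambda_k)$ an eigenvalue of $-d^2/dx_3^2$ and inverting the explicit increasing function $\phi$. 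You instead prove a uniform lower bound $\lambda_\var^k\ge 1/C>0$ directly at the $\var$ level, via the horizontal extension operator plus Poincar\'e in $x_3$ on $M$ and a trace/Poincar\'e estimate to carry the control into $F$; this chain is sound (the full $H^1(M)$ norm needed for the trace is indeed bounded by $C\lambda_\var^k$ thanks to the energy identity and the already-established $L^2(M)$ bound). Your route is more self-contained and does not rely on the structure of the limit system, at the cost of invoking the extension operator one proposition earlier than the paper does; the paper's route gives the sharper, explicit constant $\mu_0=\phi^{-1}(\lambda_0)$. Neither argument gives the strict inequality $\lambda_k<\mu_1$, but that is consistent with the paper, whose Theorem 1.3 explicitly allows $\lambda_k=\mu_1$ as a separate case.
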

  \begin{proof}
  We first prove an apriori estimate on the sequence of eigenvalues which will play a key role in the sequel. Let $ \lambda^0_k $ be the  k-th eigenvalue of $ \display - \frac{d^2}{dx_3^2}$ in $(0, L)$ with homogeneous Dirichlet boundary conditions and let  $\mu_1$ be the first eigenvalue of $ - \Delta'_y$ in $ D $ with homogeneous Dirichlet boundary condition.
	
	\medskip
	
  We claim that
  \begin{equation}\label{fundamentalestimate} \forall \, \var, \quad \forall \, k=1,2,..., \quad \lambda_\var^k \leq \mu_1 + \var^2  \lambda^0_k. \end{equation}
  Indeed, we use the well known min-max formula giving  the k-th eigenvalue $\lambda_\var^k$ of (\ref{FV2}),  
  \begin{equation}\label{minmax}
\display  \lambda_\var^k =  \display \min_{V^k \subset V_s} \max_{u\in V^k} \frac{ \display \int_\Omega \left( \left(\left| \nabla_y'u \right|^2 + \var^2 \left| \frac{\partial u}{\partial x_3} \right|^2 \right) \chi_F + \left(\frac{1}{\var^2} \left| \nabla_y'u \right|^2 +  \left| \frac{\partial u}{\partial x_3} \right|^2 \right) \chi_M \right) \ dy \ dx_3}{\display \int_\Omega \left| u \right|^2 \ dy \ dx_3},
   \end{equation}
  where the space $V_s$ is defined by (\ref{l'espace Vbis}) (with $\var=1$) and the $\min$ runs over all subspaces $V^k$ of $V_s$ with finite dimension $k$. \par
  Let $\phi(y)$ be an eigenvector associated to $\mu_1$ extended by zero in $ C \setminus D$. Then $\phi(y) \psi(x_3)$ belongs to $V_s$ for any $\psi \in H_0^1(0,L)$ and $\phi \psi = 0$ in $M:= (C \setminus D) \times (0, L)$. \par
  Let $V^k$ be the subspace of $V_s$ spanned by $\left\{\phi v^1, \phi v^2, ..., \phi v^k \right\}
  $ where $v^1, v^2, ..., v^k$ denote the associated eigenvectors to the first k eigenvalues $\lambda_1^0, \lambda_2^0, ..., \lambda_k^0$  of $\display - \frac{d^2}{dx_3^2}$ with homogeneous Dirichlet boundary conditions. \par
  For any $ u= \alpha_1 \phi v^1 + ... + \alpha_k \phi v^k \, \in V^k$, we have $u=0$ in $M$ and since $v^1, v^2, ..., v^k, ..., $ is an orthonormal basis in $H_0^1(0,L)$ we also have
  \begin{equation}\label{calculminmax} \left\{\begin{array}{ll} \display  \int_\Omega u^2 dy\ dx_3 = \int_{ D} \phi^2 \ dy \ \int_0^L \bigl( \alpha_1^2 (v^1)^2+ ... + \alpha_k^2 (v^k)^2\bigr) \ dx_3 \\\\ =  \display \bigl( \alpha_1^2 + ... + \alpha_k^2\bigr) \int_{ D} \phi^2 \ dy, \\\\
 \display  \int_\Omega \vert \nabla'_y u \vert^2 dy \ dx_3 =  \bigl( \alpha_1^2 + ... + \alpha_k^2\bigr) \int_{ D} \vert \nabla'_y \phi \vert^2 \ dy \\\\
  =  \display \bigl( \alpha_1^2 + ... + \alpha_k^2\bigr) \mu_1 \int_{ D}\vert \phi \vert^2 \ dy , \\\\
 \display  \int_\Omega \var^2 \left| \frac{\partial  u}{\partial x_3}\right|^2 dy \ dx_3 =  \var^2   \int_0^L \left( \alpha_1^2 \left|\frac{dv^1}{dx_3}\right|^2+ ... + \alpha_k^2 \left|\frac{dv^k}{dx_3}\right|^2\right) \ dx_3  \int_{ D}\vert  \phi \vert^2 \ dy, \\\ 
    \display =  \var^2   \bigl( \alpha_1^2 \lambda_1^0 + ... + \alpha_k^2 \lambda_k^0\bigr) \int_{D} \vert \phi \vert^2 \ dy \leq \var^2  \lambda_k^0  \bigl( \alpha_1^2 + ... + \alpha_k^2\bigr) \int_{D}\vert \phi \vert^2 \ dy.
   \end{array} \right.
  \end{equation}
  Note that the equality occurring in the fifth line of (\ref{calculminmax}) is a consequence of  the equation $ - \Delta'_y \phi = \mu_1 \phi \quad \hbox{in} \, \,  D$. Hence, using (\ref{calculminmax}) in the min-max formula above, we get estimate (\ref{fundamentalestimate}). 
	
	\medskip
	
  We obtain that $\lambda_k \in (0, \mu_1)$ by passing to the limit (for a subsequence of $\var$) in (\ref{fundamentalestimate}). 
	
	\medskip
  
  Turning back to (\ref{FV2}) and taking $u_\var^k$ (with $\parallel u_\var^k \parallel_{L^2(\Omega)} =1$) as a test function, we get 
  \begin{equation}\label{aprioriestimteontheeigenvectors}
  \display \int_\Omega \left( \left(\left| \nabla' u_\var^k \right|^2 + \var^2 \left| \frac{\partial u_\var^k}{\partial x_3}\right|^2\right)\chi_F + \left(\frac{1}{\var^2}\left| \nabla' u_\var^k \right|^2 +  \left| \frac{\partial u_\var^k}{\partial x_3}\right|^2\right)\chi_M \right) \ dy \ dx_3 = \lambda_\var^k \leq K.
  \end{equation}
  The last estimate implies that $\nabla'u_\var^k$ is bounded in $L^2(\Omega)$ and thus $u_\var^k$ is bounded in $L^2(0,L; H^1(C))$. Hence, there exist a sequence of $\var$ and $u_k \in L^2(C; H_0^1(0,L))$ such that the convergence (\ref{CW1}) holds true. 
	
	\medskip
	
  One has  $\nabla'u_\var^k\chi_M(y) \weak \nabla'u_k \chi_M$ weakly in $L^2(\Omega)$. But $ \nabla'u_\var^k \chi_M $ which is bounded in $L^2(\Omega)$ by $C\var$ strongly converges to zero in $L^2(\Omega)$. Hence, $\nabla'u_k \chi_M = 0$ which means that $u_k = v_k(x_3)$ for some $v_k \in L^2(0,L)$ a.e. in $M$.
  The sequence  $u_\var^k\chi_M(y)$ (note that the characteristic functions $\chi_F$ and $\chi_M$ depend only on the horizontal variable $y$) is bounded in $L^2(C; H_0^1(0,L))$ since $\display \frac{\partial u_\var^k}{\partial x_3} \chi_M $ is bounded in $L^2(\Omega)$ so that for a subsequence $ \display  \frac{\partial u_\var^k}{\partial x_3} \chi_M \weak \, \frac{\partial u_k}{\partial x_3} \chi_M = \frac{ dv_k}{d x_3} \chi_M \quad \hbox{weakly in} \, \, L^2(\Omega) $. Hence $v_k \in H_0^1(0, L)$ and the convergence (\ref{C2}) holds true. The proof of  Proposition \ref{propo1} is complete.
    \end{proof}
    
   \subsection{The limit problem associated to (\ref{FV2})}
   Choosing a test function in (\ref{FV2}) in the form  $\phi = \bar u  $ with $ \bar u = \bar v(x_3) \, \hbox{in} \, \, M$ and $(\bar u, \bar v) \in V_s \times H_0^1(0,L)$, we get from (\ref{FV2}) 
    \begin{equation} \label{FV2bis}    \displaystyle \int_\Omega \left( \left(\nabla' u_\var^k \nabla' \bar u + \var^2 \frac{\partial u_\var^k}{\partial x_3} \frac{\partial \bar u}{\partial x_3}\right) \chi_{F} +    \frac{\partial u_\var^k}{\partial x_3} \frac{d \bar v}{dx_3} \chi_{M} \right) dy \ dx_3   = \lambda_\var^k \display \int_\Omega u_\var^k \bar u\ dy \ dx_3.   
\end{equation}
   Passing to the limit in this equation, we get with the help of (\ref{propo1})
    \begin{equation} \label{FV2ter} \left\{\begin{array}{ll}  (u_k,v_k) \in L^2(0,L; H^1(C)) \times H_0^1(0,L),  \, \,  u_k =  v_k \,\, \hbox{in} \,\, M, \\\\  \displaystyle \int_\Omega \left( \nabla' u_k \nabla' \bar u \chi_{F} +   \frac{ dv_k}{dx_3} \frac{d\bar v}{dx_3} \chi_{M} \right) dy \ dx_3   = \lambda_k \display \int_\Omega u_k \bar u\ dy \ dx_3, \\\\ \forall \, (\bar u,\bar v) \in V_s \times H_0^1(0,L),  \, \, \bar u = \bar v \, \, \hbox{in}  \, M.   
  \end{array} \right. 
	\end{equation}
  Finally a density argument allows to extend  (\ref{FV2ter}) to all test functions $\bar u \in L^2(0,L; H^1(C))$ such that $\bar u = \bar v \, \hbox{in} \, M$ and $ \bar v \in H_0^1(0,L)$. 
	
	\medskip
	
  Choosing successively in (\ref{FV2ter}) $ \bar u  \in  \, L^2(0,L; H^1(C))$ such that 
  $ \bar u = 0 \, \hbox{in} \, \, M$ and then $ \bar u  \in  \, L^2(0,L; H^1(C))$ such that $ \bar u = \bar v \, \in H_0^1(0,L) \, \hbox{ almost everywhere in} \, \, \Omega$ and bearing in mind the geometry of $\Omega:= C\times (0,L) = \bigl( (C\setminus D) \cup D \bigr) \times (0,L)$, we get that the limit problem (\ref{FV2bis}) may be split into two equations leading to the following equivalent system   
  \begin{equation} \label{FF2bis} \left\{\begin{array}{ll}  u_k(y,x_3) \in L^2((0,L); H^1(C)),\quad
 \displaystyle - \Delta'_y u_k (y,x_3) = \lambda_k u_k \,\, \, \,  \hbox{in} \,\,  D \times (0, L),  \\\\
  u_k = v_k \quad   \hbox{on} \, \, \, \partial D \times (0, L), 
   \\\\
 v_k \in  H^1_0(0, L)),  \quad \display - \frac{d^2 v_k}{dx_3^2} = \lambda_k v_k  +\frac{ \lambda_k}{\vert C \setminus D \vert} \displaystyle \int_{D} u_k \ dy \quad \hbox{in} \, \, \,  (0,L).
  \end{array} \right. \end{equation}

  \medskip
  \begin{Remark}\label{u_ketv_k} Eigenvectors of (\ref{FF2bis}) corresponding to eigenvalues $\lambda_k < \mu_1$ are pairs $(u_k,v_k)$ made up of two inseparable elements. In particular, if $v_k = 0$ then $u_k = 0$ as shown by (\ref{FF2bis}). Indeed, otherwise $u_k$ should be an eigenvector of $-\Delta'_y$ associated to the eigenvalue $ \lambda_k < \mu_1$ which is a contradiction. Conversely if $u_k = 0$ then $v_k = 0$ since almost everywhere in $(0,L)$, we have $v_k = u_k$ on the boundary of $D$. Hence, the eigenvectors $(u_k,v_k)$ of the limit operator are such that $u_k \not=0$ and $v_k \not=0$.
  \end{Remark}
  We now prove that (\ref{FF2}) and (\ref{FF2bis}) are equivalent if one defines $u_k$ by (\ref{expressiondeu_k}) and then we will improve the lower bound of the limit eigenvalues using (\ref{FF2}).
  \begin{Proposition}\label{equivalence}
  If $(\lambda_k, u_k, v_k)$ solves the system (\ref{FF2bis}) with $ 0 < \lambda_k < \mu_1$, then $v_k\not=0$ and $u_k$ writes as \begin{equation} \label{expressiondeu_k} u_k (y,x_3) = ( \lambda_k  u_0^k(y) + 1) v_k(x_3) \end{equation}
  where  $(\lambda_k, u_0^k, v_k)$ solves  (\ref{FF2}). Furthermore, there exists a positive constant $\mu_0$ depending both on $\mu_1$ and on the first eigenvalue of $ \display -\frac{d^2}{dx_3^2}$ in $H_0^1(0,L)$ such that $\lambda_k \geq \mu_0$ for all $k$. 
  \end{Proposition}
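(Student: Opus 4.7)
\medskip

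The plan is to first construct $u_0^k$ from an auxiliary inhomogeneous Dirichlet problem, then use a uniqueness argument to identify $u_k$ with the product $(\lambda_k u_0^k + 1) v_k$, and finally to convert the resulting ODE for $v_k$ into a positive lower bound on $\lambda_k$. The non-vanishing of $v_k$ is the contrapositive of the observation recorded in Remark 2.2: if $v_k \equiv 0$, then $u_k$ would solve a homogeneous Dirichlet eigenvalue problem for $-\Delta'_y$ in $D \times (0,L)$ with spectral parameter $\lambda_k < \mu_1$, forcing $u_k \equiv 0$ and excluding a nontrivial eigenpair. For the same reason $\lambda_k$ lies in the resolvent set of the Dirichlet Laplacian on $D$, hence the equation $-\Delta'_y u_0^k = \lambda_k u_0^k + 1$ in $D$ with $u_0^k|_{\partial D} = 0$ admits a unique solution $u_0^k \in H^1_0(D)$, which is the first line of (\ref{FF2}).

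\medskip

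Next, I would set $w := u_k - (\lambda_k u_0^k + 1) v_k$. A direct computation using the equation for $u_0^k$ shows that the product $(\lambda_k u_0^k + 1) v_k$ satisfies the same PDE $-\Delta'_y(\cdot) = \lambda_k (\cdot)$ as $u_k$ on $D \times (0,L)$, and takes the value $v_k$ on $\partial D \times (0, L)$ since $u_0^k$ vanishes there. Hence $w \in L^2(0, L; H^1_0(D))$ solves a homogeneous Dirichlet eigenvalue problem for $-\Delta'_y$ with spectral parameter $\lambda_k < \mu_1$, so $w \equiv 0$ and the factorization (\ref{expressiondeu_k}) holds. Inserting this expression into the second equation of (\ref{FF2bis}) and simplifying $\int_D u_k\, dy = |D|\, v_k + \lambda_k v_k \int_D u_0^k\, dy$ immediately reproduces the second equation of (\ref{FF2}).

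\medskip

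For the lower bound on $\lambda_k$, the starting point is that $v_k \not= 0$ lies in $H^1_0(0, L)$ and satisfies the ODE $-v_k'' = s(\lambda_k)\, v_k$ with $s(\lambda_k) := \lambda_k \bigl(1 + \frac{|D|}{|C \setminus D|} + \frac{\lambda_k}{|C \setminus D|} \int_D u_0^k\, dy\bigr)$, so $s(\lambda_k) \geq \lambda_1^0$, the first Dirichlet eigenvalue of $-d^2/dx_3^2$ on $(0, L)$. The main obstacle is that the moment $\int_D u_0^k\, dy$ depends on $\lambda_k$ and is not a priori controlled, so this inequality does not at once yield a lower bound on $\lambda_k$. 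To remove this obstruction, I would test the equation for $u_0^k$ against itself, obtaining $\|\nabla u_0^k\|^2_{L^2(D)} = \lambda_k \|u_0^k\|^2_{L^2(D)} + \int_D u_0^k\, dy$, and then combine this identity with the Poincar\'e inequality $\mu_1 \|u_0^k\|^2_{L^2(D)} \leq \|\nabla u_0^k\|^2_{L^2(D)}$ and Cauchy--Schwarz to deduce $0 \leq \int_D u_0^k\, dy \leq |D|/(\mu_1 - \lambda_k)$. Plugging this upper bound into $s(\lambda_k) \geq \lambda_1^0$ produces a quadratic inequality in $\lambda_k$ whose smallest positive root is a constant $\mu_0 > 0$ depending only on $\mu_1$, $\lambda_1^0$ and the measures $|D|, |C \setminus D|$, which is the desired lower bound.
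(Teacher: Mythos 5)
Your proposal is correct and follows essentially the same route as the paper: non-vanishing of $v_k$ via Remark 2.2, identification of $u_k$ with $(\lambda_k u_0^k+1)v_k$ through uniqueness for $-\Delta'_y-\lambda_k$ on $H_0^1(D)$ (you subtract the candidate product where the paper divides by $v_k$, a cosmetic difference), and the bound $\int_D u_0^k\,dy \le |D|/(\mu_1-\lambda_k)$ obtained from the energy identity, Poincar\'e and Cauchy--Schwarz, fed into $s(\lambda_k)\ge\lambda_1^0$ and inverted by monotonicity to get $\mu_0$.
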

  \begin{proof}
  Assume that $(u_k, v_k)$ is a non trivial solution of (\ref{FF2bis}), i.e, $(u_k, v_k)$ is an eigenvector of the limit operator. Then according to the Remark \ref{u_ketv_k} above, $v_k\not=0$ and $u_k \not=0$.  
	
	\medskip
	
  Dividing by $v_k$ in the first system of (\ref{FF2bis}), one can check easily that $ w_k:= \frac{u_k}{v_k} - 1$ is the unique solution of 
 \begin{equation}\label{afterdividing} \left\{\begin{array}{ll}  \displaystyle - \Delta'_y w_k = \lambda_k w_k + \lambda_k  \,\, \, \,  \hbox{in} \,\,  D \\\\
     w_k = 0 \quad   \hbox{on} \, \, \, \partial D.    \end{array} \right. \end{equation}
Note that the uniqueness of $w_k$ is ensured since $ \lambda_k < \mu_1$ belongs to the resolvent of $-\Delta'_y$. On the other hand, the function $ \lambda_k u_0^k$ where $u_0^k$ is defined in (\ref{FF2}) is also a solution of (\ref{afterdividing}) so that the equality $ w_k:= \frac{u_k}{v_k} - 1 = \lambda_k u_0^k$ holds true and therefore (\ref{expressiondeu_k}) follows. Using (\ref{expressiondeu_k}) in (\ref{FF2bis}) we get (\ref{FF2}). 

\medskip

We now make more precise the lower bound of the sequence of eigenvalues and we prove at the meanwhile that $\display   \int_{D}  u^k_0(y) \ dy > 0.$ 

\medskip

Multiplying the first equation of (\ref{FF2}) by $u^k_0$ and using $\mu_1^{-1}$ as the constant (it is in fact the best one) in the Poincar\'e's inequality, we get
\begin{equation}\label{Lowerboundforeigenvalues}
\display  \int_D u^k_0(y) \ dy =  \int_D \vert \nabla'_y u^k_0(y)\vert^2 \ dy - \lambda_k \int_D \vert u^k_0(y)\vert^2 \ dy \geq \left(1- \frac{\lambda_k}{\mu_1}\right) \int_D \vert \nabla'_y u^k_0(y)\vert^2 \ dy.
\end{equation}
On the other hand, the first eigenvalue $\mu_1$ is characterized by 
\begin{equation} \display \mu_1 = \inf_{ u \in  H_0^1(D) \setminus \lbrace 0 \rbrace} \frac{\parallel \nabla'_y u \parallel^2_{L^2(D)}}{\parallel  u \parallel^2_{L^2(D)}}.\end{equation}
Hence the following estimate holds true
\begin{equation} \label{lowerboundbis}
 \display \int_{D} \vert \nabla'_y u^k_0(y)\vert^2 \ dy \geq \mu_1 \int_{ D} \vert u^k_0(y)\vert^2 \ dy.
\end{equation}
From (\ref{Lowerboundforeigenvalues}), we derive with the help of (\ref{lowerboundbis})
\begin{equation} \label{lowerboundter}
 \display ( \mu_1 - \lambda_k)  \int_{ D} \vert  u^k_0(y)\vert^2 \ dy \leq \int_{ D}  u^k_0(y) \ dy \leq \sqrt{\vert  D\vert} \left(\int_{ D} \vert  u^k_0(y)\vert^2 \ dy \right)^\frac{1}{2},
\end{equation}
and then from (\ref{lowerboundter}) we deduce
\begin{equation} \label{lowerboundlast}
 \display  0 <  \int_{ D}  u^k_0(y) \ dy \leq  \frac{\vert  D\vert} {\mu_1 - \lambda_k}.
\end{equation}
By virtue of the last equation in (\ref{FF2}), $ \display  \hat \lambda_k:= \lambda_k \bigl( 1 + \frac{\vert D \vert }{\vert C \setminus D \vert }+ \frac{ \lambda_k}{\vert  C \setminus D \vert} \displaystyle \int_{D} u^k_0 \ dy \bigr) $ is an eigenvalue of $\display - \frac{d^2}{dx_3^2}$ so that $ \hat \lambda_k \geq \lambda_0$ where $\lambda_0$ denotes the first eigenvalue of $\display - \frac{d^2}{dx_3^2}$. Using the second inequality of (\ref{lowerboundlast}) we get 
\begin{equation} \label{lowerboundlastbis}
 \display  \lambda_k \left( 1 + \frac{\vert  D\vert }{\vert C \setminus D \vert } +   \lambda_k \frac{\vert  D\vert} {\vert C \setminus D\vert (\mu_1 - \lambda_k)}\right) \geq \hat \lambda_k \geq \lambda_0.
\end{equation}
Hence, $\display \lambda_k \geq \mu_0:=  \phi^{-1}(\lambda_0)$ where  $\phi$ is the continuous increasing function defined on $ (0, \mu_1) $ by $$\phi(t) = t \left( 1 +\frac{\vert D\vert }{\vert C \setminus D \vert } +   t \frac{\vert  D\vert} {\vert C\setminus D \vert (\mu_1 - t)}\right).$$
   \end{proof} 
   
   So far, we have not yet proved that $(u_k, v_k)$ is indeed an eigenvector of the limit operator; this is the purpose of the next subsection.
   \subsection{The strong convergence of the eigenvectors}
  We prove the following compactness result.
  \begin{Proposition}\label{strongconvergence}
  For each $k$, there exists a subsequence of $\var$ such that the sequence of solutions $u_\var^k$ of (\ref{FV2}) converges strongly in $L^2(\Omega)$ to the eigenvector $u_k$ of (\ref{FF2bis}). 
  \end{Proposition}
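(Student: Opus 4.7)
The plan is to decompose $\Omega = F \cup M$ and argue separately on each piece, combining the extension technique of \cite{CioPau} with a Poincar\'e-based contradiction argument in the spirit of \cite{Zikool}. On the matrix $M$, the apriori bound (\ref{aprioriestimteontheeigenvectors}) yields $\|\nabla' u_\var^k\|_{L^2(M)} = O(\var)$ and $\|\partial_3 u_\var^k\|_{L^2(M)} \leq K$, hence $u_\var^k|_M$ is bounded in $H^1(M)$. Since $M$ is a fixed Lipschitz domain, Rellich--Kondrachov delivers, up to a subsequence, $u_\var^k \to v_k$ strongly in $L^2(M)$.

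On the fiber $F = D \times (0,L)$ the blow-up $\|\partial_3 u_\var^k\|_{L^2(F)} = O(\var^{-1})$ blocks any direct compactness, so I would introduce a linear continuous slice-wise extension $P : H^1(C\setminus \overline D) \to H^1(C)$ of Cioranescu--Saint Jean Paulin type, mapping horizontally constant functions to themselves and satisfying the refined bound $\|\nabla'_y Pw\|_{L^2(C)} \leq C\|\nabla'_y w\|_{L^2(C\setminus \overline D)}$. Setting $\tilde u_\var^k(y,x_3) := P(u_\var^k(\cdot,x_3))(y)$, one checks that $\tilde u_\var^k \in V_s$ is bounded in $H^1(\Omega)$ and converges strongly in $L^2(\Omega)$ by Rellich to a limit $\tilde u$; since $\tilde u|_M = v_k$ depends only on $x_3$ and $P$ preserves horizontal constants, $\tilde u = v_k$ throughout $\Omega$. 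Crucially, the refined bound on $P$ forces the strong (not merely weak) smallness $\|\nabla' \tilde u_\var^k\|_{L^2(\Omega)} \leq C\|\nabla' u_\var^k\|_{L^2(M)} = O(\var) \to 0$.

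It then suffices to show $\alpha := \|u_k\|_{L^2(\Omega)} = 1$, since combined with the weak convergence $u_\var^k \weak u_k$ this gives strong $L^2(\Omega)$ convergence. Assume for contradiction $\alpha < 1$ and set $w_\var^k := u_\var^k - u_k$; the strong $L^2(M)$ convergence already obtained forces $w_\var^k \to 0$ in $L^2(M)$, so the lost mass sits in $F$: $\int_F |w_\var^k|^2 \to 1 - \alpha^2$. The corrected difference $\Theta_\var^k := (w_\var^k - (\tilde u_\var^k - v_k))\chi_F$ belongs to $L^2(0,L; H^1_0(D))$, because both $u_\var^k - \tilde u_\var^k$ and $u_k - v_k$ vanish on $\partial D \times (0,L)$; and the strong $L^2(\Omega)$ convergence of $\tilde u_\var^k$ gives $\|\Theta_\var^k\|_{L^2(F)}^2 \to 1 - \alpha^2$. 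Poincar\'e in $D$ with Dirichlet trace on $\partial D$ and sharp constant $\mu_1^{-1}$, applied on each horizontal slice, yields
\[
\int_F |\nabla' \Theta_\var^k|^2 \, dy\, dx_3 \;\geq\; \mu_1 \int_F |\Theta_\var^k|^2 \, dy\, dx_3 \;\longrightarrow\; \mu_1(1-\alpha^2).
\]
Since $\nabla' v_k = 0$ and $\nabla' \tilde u_\var^k \to 0$ strongly in $L^2(F)$, one has $\int_F |\nabla' \Theta_\var^k|^2 = \int_F |\nabla' w_\var^k|^2 + o(1)$. On the other hand, the energy identity obtained by taking $\phi = u_\var^k$ in (\ref{FV2}), combined with the limit identity $\int_F |\nabla' u_k|^2 + \int_M |\partial_3 v_k|^2 = \lambda_k \alpha^2$ (from (\ref{FV2ter}) tested with $\bar u = u_k$) and the lower-semicontinuity $\liminf \int_M |\partial_3 u_\var^k|^2 \geq \int_M |\partial_3 v_k|^2$, provides $\limsup \int_F |\nabla' w_\var^k|^2 \leq \lambda_k(1-\alpha^2)$. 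Combining, $\mu_1(1-\alpha^2) \leq \lambda_k(1-\alpha^2)$, which forces $\mu_1 \leq \lambda_k$ when $\alpha < 1$, contradicting Proposition \ref{equivalence}.

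The main obstacle is the non-compactness of $L^2(0,L; H^1(C)) \hookrightarrow L^2(\Omega)$ in the vertical direction, amplified on $F$ by the $\var^{-1}$ blow-up of $\partial_3 u_\var^k$; neither Aubin--Lions nor Fr\'echet--Kolmogorov is available. The horizontal Poincar\'e inequality on the slices of $F$ circumvents this obstruction because its sharp constant $\mu_1^{-1}$ dovetails exactly with the spectral bound $\lambda_k < \mu_1$ from (\ref{fundamentalestimate}), so that any residual mass in $F$ would violate the eigenvalue estimate.
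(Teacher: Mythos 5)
Your argument is correct, but it follows a genuinely different route from the paper's. You share the first step (the Cioranescu--Saint Jean Paulin extension with the bound (\ref{normeduprolongement}), which together with (\ref{aprioriestimteontheeigenvectors}) gives an $H^1(\Omega)$-bounded extension and strong $L^2$ compactness on the matrix side), but from there the paper corrects the extension so that it solves $-\Delta'_y U_\var^k - \var^2\partial^2_{x_3}U_\var^k=0$ in $F$ (see (\ref{equationontheextension})), writes the equation (\ref{equationsurzepsilon}) for $z_\var^k=u_\var^k-U_\var^k$, proves an auxiliary lemma (Lemma 2.6) on the family of fiber problems (\ref{equationsgenerales}) with spectral parameter $\lambda_\var^k<\mu_1$, and then uses a duality/transposition trick — testing the $z$-equation against $w_\var$ and conversely, then choosing $f_\var=z_\var^k$ — to obtain convergence of the norms (\ref{convergencedelanorme}) and hence strong convergence. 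You instead run an energy-defect (concentration-compactness style) argument: any lost $L^2$ mass must sit in $F$, is carried by the $H^1_0(D)$-valued defect $\Theta_\var^k$ (Dirichlet trace coming from the extension and from $u_k=v_k$ on $\partial D$), and by the sharp slice-wise Poincar\'e inequality costs horizontal energy at least $\mu_1(1-\alpha^2)$, while the eigenvalue identity from (\ref{FV2}) tested with $u_\var^k$, the limit identity from (\ref{FV2ter}), weak convergence (\ref{CW1}) and lower semicontinuity via (\ref{C2}) only allow $\lambda_k(1-\alpha^2)$; since $\lambda_k<\mu_1$ (the standing assumption $\lambda_k\neq\mu_1$ of Theorem \ref{3d-1d}, which the paper's Lemma 2.6 needs as well — your attribution of it to Proposition \ref{equivalence} is slightly off, as that proposition assumes rather than proves the strict upper bound), the defect vanishes and norm plus weak convergence give strong convergence. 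Both proofs ultimately hinge on the same two ingredients — the extension bound and the gap between $\lambda_k$ and the sharp Poincar\'e constant $\mu_1$ — but yours is more elementary and self-contained (no corrected extension, no auxiliary PDE lemma, no transposition), at the price of relying on the normalization $\|u_\var^k\|_{L^2(\Omega)}=1$ and the specific energy identity, whereas the paper's duality lemma is a more flexible tool (it handles arbitrary weakly convergent right-hand sides and is reused conceptually in the homogenization section).
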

  \begin{proof}
  One can extend $u_\var^k$ from $M$ to the whole $\Omega$ in such a way the extension $ {U_\var^k } $ fulfills $ U_\var^k \in V_s, \, \, \,  U_\var^k  = u_\var^k \, \, \hbox{in} \, \, M$ and  \begin{equation} \label{normeduprolongement} \display  \parallel \nabla'U_\var^k \parallel_{L^2(\Omega)} \leq K \parallel \nabla'u_\var^k \parallel_{L^2(M)}, \, \, \, \,  \left\| \frac{\partial U_\var^k}{\partial x_3}  \right\|_{L^2(\Omega)} \leq K \left\| \frac{\partial u_\var^k}{\partial x_3}   \right\|_{L^2(M)}. \end{equation}
  Note that the extension only affects the horizontal variable $y$  so that the Dirichlet boundary condition on the upper and lower faces of $\Omega$ ($x_3 = 0 $ or $x_3  = L$ ) is preserved, see for instance \cite{Bre}, \cite{CioPau}, \cite{Zikool}. \par
  In addition, one can assume that such extension satisfies the following equation
  \begin{equation} \label{equationontheextension}  
  \display - \Delta'_y U_\var^k - \var^2 \frac{\partial^2 U_\var^k}{\partial x_3^2} = 0 \quad  \hbox{in} \, \, F.
    \end{equation}
  
  Indeed, if (\ref{equationontheextension}) is not true for $U_\var^k$, then one can introduce the function $W_\var^k$ as the unique solution of 
  \begin{equation}\label{equationontheextensionbisFV} \left\{\begin{array}{ll}  \displaystyle W_\var^k \in V, \\\\
   \display \int_F \left( \frac{1}{\var^2} \nabla'_y W_\var^k \nabla'_y \phi +  \frac{\partial W_\var^k}{\partial x_3} \frac{\partial \phi}{\partial x_3} \right) dy \ dx_3 =   \int_F \left( \frac{1}{\var^2} \nabla'_y U_\var^k \nabla'_y \phi +  \frac{\partial U_\var^k}{\partial x_3} \frac{\partial \phi}{\partial x_3} \right) dy \ dx_3,  \\\\ \forall \, \phi \in V, 
   \end{array} \right.
 \end{equation}
  where $V:=  \big\lbrace u \in V_s, u = 0 \, \, \, \hbox{on} \, \, \, \partial D \times (0,L) \big\rbrace $ (recall that $V_s:= V^\var_s$ with $\var = 1$ where $ V^\var_s$ is defined by (\ref{l'espace Vbis})).
  Hence, $V$ is the subspace of $V_s$ of functions vanishing in $M$. By the Lax-Milgram Theorem we get existence and uniqueness for $W_\var^k$. Choosing $\phi \in C_0^\infty(F)$, the last equation leads to 
   \begin{equation}\label{equationontheextensionbis}  \displaystyle - \frac{1}{\var^2} \Delta'_y W_\var^k -  \frac{\partial^2 W_\var^k}{\partial x_3^2} = - \frac{1}{\var^2} \Delta'_y U_\var^k -  \frac{\partial^2 U_\var^k}{\partial x_3^2} \quad \hbox{in} \, \, \, F.
\end{equation}
 On the other hand, using equation (\ref{equationontheextensionbisFV})  with $\phi = W_\var^k$, we get the following estimate with the help of (\ref{normeduprolongement}) and (\ref{aprioriestimteontheeigenvectors})
   \begin{equation}\label{estimateonW} \left\{\begin{array}{ll}  \displaystyle
   \display  \left\| \frac{1}{\var}\nabla'W_\var^k \right\|_{L^2(F)} +   \left\|  \frac{\partial W_\var^k}{\partial x_3}   \right\|_{L^2(F)} \leq K \left(
\left\| \frac{1}{\var}\nabla'U_\var^k \right\|_{L^2(F)} +  \left\| \frac{\partial U_\var^k}{\partial x_3}  \right\|_{L^2(F)} \right)  \leq \\\\
\leq  K \display  \left(
\left\| \frac{1}{\var}\nabla'u_\var^k \right\|_{L^2(M)} +  \left\| \frac{\partial u_\var^k}{\partial x_3}  \right\|_{L^2(M)} \right)  \leq K. \end{array} \right.
   \end{equation}
  Multiplying equation (\ref{equationontheextensionbis}) by $\var^2$, we see that $ \tilde u_\var^k$ defined by $ \tilde u_\var^k = U_\var^k - W_\var^k$ is indeed an extension which fulfills equation (\ref{equationontheextension}) and preserves the apriori estimate (\ref{normeduprolongement}). Note that functions of $V$ may be extended by zero inside $M$ so that $\tilde u_\var^k$ is still an extension of $ u_\var^k$ from $M$ to the whole $\Omega$. 
	
	\medskip
	
  In the sequel, we will still denote the extension of $u_\var^k$ satisfying (\ref{normeduprolongement}) and (\ref{equationontheextension})  by $U_\var^k$. 
	
	\medskip
	
   Consider now the sequence defined in $\Omega $ by $z_\var^k = u_\var^k - U_\var^k$. If we prove that $z_\var^k$ admits  a strongly converging subsequence in $L^2(\Omega)$ then we can deduce the existence of such subsequence for $u_\var^k$ since $U_\var^k$ is bounded in $H^1(\Omega)$ by virtue of  (\ref{normeduprolongement}) and (\ref{aprioriestimteontheeigenvectors}) and therefore admits a strongly converging subsequence in $L^2(\Omega)$ according to the Rellich imbedding Theorem.
	
	\medskip
	
  We first derive the following equation on $z_\var^k$ by the use of (\ref{newstrongformulationS'}) together with (\ref{equationontheextension})
  \begin{equation}\label{equationsurzepsilon} \left\{\begin{array}{ll}  \displaystyle z_\var^k \in V_s, \quad
   -  \Delta'_y z_\var^k -  \var^2\frac{\partial^2 z_\var^k}{\partial x_3^2} = \lambda_\var^k z_\var^k + \lambda_\var^k U_\var^k    \quad \hbox{in} \, \, \, F,\\\\
  z_\var^k = 0 \quad \hbox{on} \, \, \, \partial D \times (0,L).
   \end{array} \right. \end{equation}
   
  Since $u_\var^k$ and $U_\var^k$ are bounded respectively in $L^2(0,L; H^1(C))$ and $H^1(\Omega)$, the sequence $z_\var^k$ is bounded in $L^2(0,L; H^1(C))$. Hence, there exist a subsequence and $z_k \in L^2(0,L; H^1(C))$ such that $$ \display z_\var^k \weak z_k \, \, \, \hbox{weakly in} \, \,  L^2(0,L; H^1(C)).$$ Therefore, denoting by $U_k$ the weak limit in $H^1(\Omega)$ of the corresponding subsequence $U_\var^k$, one can pass easily to the limit in (\ref{equationsurzepsilon}) to get the equation
  \begin{equation}\label{equationsurz_k} \left\{\begin{array}{ll}  \displaystyle z_k \in  L^2(0,L; H^1(C)), \quad 
   -  \Delta'_y z_k  = \lambda_k z_k + \lambda_k U_k    \quad \hbox{in} \, \, \, F,\\\\
  z_k = 0 \quad \hbox{on} \, \, \, \partial D \times (0,L).
   \end{array} \right. \end{equation}
  Note that by construction, $z_\var^k = 0 $ in $M= (C \setminus \overline D) \times (0,L)$ so that the  convergence $$ z_\var^k \chi_M(y)  \weak \, z_k \chi_M(y) \, \, \hbox{weakly in } \, \, L^2(\Omega)$$ shows that $ z_k = 0 \,  \,  \,  \hbox{in} \, \, M$ which is equivalently expressed by the boundary condition of (\ref{equationsurz_k}). \par
  More generally, given a bounded sequence $(f_\var)$ in $L^2(\Omega)$ and $f \in L^2(\Omega)$, we now consider equations of the form 
  \begin{equation}\label{equationsgenerales} \left\{\begin{array}{ll}  \displaystyle w_\var \in V_s, \quad
   -  \Delta'_y w_\var -  \var^2\frac{\partial^2 w_\var}{\partial x_3^2} = \lambda_\var^k w_\var + f_\var    \quad \hbox{in} \, \, \, F,\\\\
  w_\var = 0 \quad \hbox{on} \, \, \, \partial D \times (0,L),
   \end{array} \right. \end{equation}
   and 
   \begin{equation}\label{equationsurw} \left\{\begin{array}{ll}  \displaystyle w \in  L^2(0,L; H^1(C)), \quad
   -  \Delta'_y w  = \lambda_k w + f   \quad \hbox{in} \, \, \, F,\\\\
  w = 0 \quad \hbox{on} \, \, \, \partial D \times (0,L).
   \end{array} \right. \end{equation}
   Regarding the sequence  of solutions  of (\ref{equationsgenerales}), the following lemma holds true.
   \begin{Lemma} Assume that $\lambda_\var^k \to \lambda_k$ with $ \lambda_k < \mu_1$ and that 
   $f_\var \weak f \, \, \, \hbox{weakly in } \, \, L^2(\Omega)$. Then the sequence $w_\var$ is bounded in $L^2(0,L; H^1(C))$ and for the whole sequence $\var$,   $w_\var \weak w \, \, \, \hbox{weakly in} \, \, L^2(0,L; H^1(C))$ where $w$ is the unique solution of (\ref{equationsurw}).
   \end{Lemma}
   \begin{proof}
   First, one can check that $\mu_1$ satisfies the inequality $\mu_1 \leq \delta_1$ where $\delta_1$ denotes the first eigenvalue of the operator $ \display  -  \Delta'_y -  \var^2\frac{\partial^2 }{\partial x_3^2} $ in $H_0^1(F)$ so that for sufficiently small $\var$, $\lambda_\var^k$ belongs to the resolvent of that operator and consequently the existence and the uniqueness of $w_\var$ follow. Hence, we only have to prove that $w_\var$ is bounded in $L^2(0,L; H^1(C))$, the limit problem (\ref{equationsurw}) satisfied by $w$  can be established exactly by the same process already used in the proof of (\ref{equationsurz_k}).
	
	\medskip
	
   The main ingredient to get that apriori estimate relies on the Poincar\'e inequality    \begin{equation}\label{Poincaré}
   \display \int_{ D} \vert u \vert^2 \  dy \leq \frac{1}{\mu_1} \int_{D} \vert \nabla'_y u \vert^2 \ dy \quad \forall \, u \in H_0^1(D),
   \end{equation}
   combined with the assumption $ \lambda_k < \mu_1 $. 
	
	\medskip
	
Multiplying equation (\ref{equationsgenerales}) by $w_\var$ and integrating, we get
   \begin{equation}\label{w_varbis} \int_0^L \int_{D} \vert \nabla'w_\var \vert^2 \ dy dx_3 \leq \lambda_\var^k  \int_0^L \int_{ D} \vert w_\var \vert^2 \ dy dx_3 + 
   \parallel f_\var \parallel_{L^2(\Omega)} \parallel w_\var \parallel_{L^2(F)}.
   \end{equation} 
   Choosing  $ u = w_\var(., x_3)$ with $x_3 \in (0, L)$ and integrating  (\ref{Poincaré}) over $(0,L)$,  we infer
  \begin{equation}\label{Poincarésurw_var} \int_0^L \int_{ D} \vert w_\var \vert^2 \ dy dx_3 \leq \frac{1}{\mu_1}\int_0^L \int_{ D} \vert \nabla'_y w_\var \vert^2 \ dy dx_3.
  \end{equation}
 Let $\delta > 0$ be such that $ 0 < \lambda_k < \delta < \mu_1$. Turning back to (\ref{w_varbis}) and using (\ref{Poincarésurw_var}), we get for $\var$ sufficiently small, 
 \begin{equation}\label{encorew_var}
\display \left(1 - \frac{\delta}{\mu_1}\right) \int_0^L \int_{ D} \vert \nabla'w_\var \vert^2 \ dy dx_3 \leq   
   \parallel f_\var \parallel_{L^2(\Omega)} \parallel w_\var \parallel_{L^2(F)}.
\end{equation}
Since $f_\var$ is bounded in $L^2(\Omega)$, applying once again inequality (\ref{Poincarésurw_var}), we derive from (\ref{encorew_var}) the estimate
\begin{equation}\label{encorew_varbis}
\display \int_0^L \int_{D} \vert \nabla'w_\var \vert^2 \ dy dx_3 \leq  K. 
  \end{equation}
The estimates (\ref{Poincarésurw_var}) and (\ref{encorew_varbis}) show that $w_\var$ is bounded in $L^2(0,L; H^1(D))$ and thus in $L^2(0,L; H^1(C))$ since $ w_\var$ is equal to zero in $ C \setminus D$.
   \end{proof}
   We continue the proof of the Proposition \ref{strongconvergence} in the following way.\par
  \noindent Multiplying  equations (\ref{equationsurzepsilon}) and (\ref{equationsgenerales})
   respectively by $w_\var$ and by $z_\var^k$ and integrating we get
   \begin{equation}\label{zepsilonetw_var}
   \left\{\begin{array}{ll}  \displaystyle \int_F \left( \nabla'z_\var^k \nabla'w_\var + \var^2 \frac{\partial z_\var^k}{\partial x_3}  \frac{\partial w_\var}{\partial x_3} \right) \ dy dx_3 = \lambda_\var^k \int_F z_\var^k w_\var \ dy dx_3 + \lambda_\var^k \int_F U_\var^k w_\var \ dy dx_3 = \\\\
    \display  \lambda_\var^k \int_F w_\var z_\var^k  \ dy dx_3 + \int_F f_\var z_\var^k \ dy dx_3.  \end{array} \right. 
\end{equation}
Since $U_\var^k$ is bounded in $H^1(\Omega)$, there exist a subsequence of $\var$ and $U_k \in H^1(\Omega)$ such that $ U_\var^k \weak U_k$ weakly in $ H^1(\Omega)$ and strongly in $L^2(\Omega)$ by virtue of the Rellich embedding Theorem. Therefore for that a subsequence, we get from (\ref{zepsilonetw_var}) with the help of Lemma 2.6
\begin{equation} \label{1erelimite}
\display \lim_{\var \rightarrow 0} \int_F f_\var z_\var^k \ dy dx_3 = \display \lim_{\var \rightarrow 0} \lambda_\var^k \int_F U_\var^k w_\var \ dy dx_3 =  \lambda_k \int_F U_k w  \ dy dx_3. \end{equation}
On the other hand, one can multiply (\ref{equationsurz_k}) and (\ref{equationsurw})
  respectively by $w$ and by $z_k$ and integrate to obtain
 \begin{equation} \label{2iemelimite}
 \left\{\begin{array}{ll}  \displaystyle \display \int_F \nabla' z_k \nabla' w \ dy dx_3 = \int_F \nabla' w \nabla' z_k \ dy dx_3 = \lambda_k \int_F z_k w \ dy dx_3  + \lambda_k \int_F U_k w \ dy dx_3  \\\\
 \display = \lambda_k \int_F w z_k  \ dy dx_3  +  \int_F f z_k \ dy dx_3. \end{array} \right.
  \end{equation}
  Combining (\ref{1erelimite}) and (\ref{2iemelimite}), we get
  \begin{equation} \label{presque}
  \display \lim_{\var \rightarrow 0} \int_F f_\var z_\var^k \ dy dx_3 = \lambda_k \int_F U_k w \ dy dx_3 = \int_F f z_k \ dy dx_3.
  \end{equation}
  Choosing in particular $ f_\var = z_\var^k $ which converges weakly in $L^2(\Omega)$ to $f=z_k$, we obtain 
  \begin{equation}\label{convergencedelanorme}
  \lim_{\var \rightarrow 0} \int_F (z_\var^k)^2 \ dy dx_3 = \int_F (z_k)^2 \ dydx_3,
  \end{equation}
  which implies the strong convergence of the subsequence $z_\var^k$ and therefore the strong convergence of the corresponding subsequence of $u_\var^k$. Hence Proposition \ref{strongconvergence} is proved.
 \end{proof}
 We now proceed to complete the proof of Theorem \ref{3d-1d}.
 \subsection{Proof of Theorem \ref{3d-1d}}
 The strong convergence in $L^2(\Omega)$ of the eigenvectors when $\lambda_k < \mu_1$ is proved in Proposition 2.4. We use it to prove the convergence of the sequence of energies  from which we obtain immediately (\ref{CV1}) and (\ref{CV1bis}). \par
  Consider the sequence
 \begin{equation}\label{CVFENERGIE}
 \display J_\var = \int_\Omega \left( \left(\left| \nabla'u_\var^k - \nabla'u_k \right|^2 + \var^2 \left| \frac{\partial u_\var^k}{\partial x_3}\right|^2 \right) \chi_F + \left( \frac{1}{\var^2}\left| \nabla'u_\var^k \right|^2 +   \left| \frac{\partial u_\var^k}{\partial x_3} - \frac{dv_k}{ dx_3}\right|^2 \right) \chi_M \right) \ dy dx_3.
 \end{equation}
 Choosing $u_\var^k$ and $(u_k,v_k)$ as test functions respectively in (\ref{FV2}) and in (\ref{FV2ter}), we get with the help of the weak convergences proved in Proposition 2.1 and of the strong convergence proved in Proposition 2.4,
 \begin{equation}\label{CVFENERGIE2}
 \left\{\begin{array}{ll}   \display J_\var = \lambda_\var^k \int_\Omega  (\vert u_\var^k \vert^2 dy dx_3 + \lambda_k \int_\Omega  \vert u_k \vert^2 dy dx_3  - 2 \int_\Omega \bigl( \nabla'u_\var^k \nabla'u_k \chi_F +   \frac{\partial u_\var^k}{\partial x_3} \frac{dv_k}{ dx_3}\chi_M \bigr) \ dy dx_3 \\\\
  \display  \longrightarrow  \, \, \,  2 \lambda_k\int_\Omega \vert u_k \vert^2 dy dx_3 - 2\lambda_k\int_\Omega \vert u_k \vert^2 dy dx_3 = 0. \end{array} \right.
 \end{equation}
 Hence the weak convergences stated in Proposition 2.1 are in fact strong convergences; in particular, keeping in mind Proposition 2.4, we get the strong convergences stated in Theorem \ref{3d-1d}.  \par  
 We have proved above that  $\lambda_k$ is an eigenvalue of the limit problem (in the sense of (\ref{LimitOperatorbis})) if and only if  $\lambda_k$ satisfies (\ref{FF2}). In the sequel, a number $\lambda$ satisfying (\ref{FF2}) will be called an eigenvalue of the limit problem (\ref{FF2}). \par
  We now prove there exist non trivial solutions for the system (\ref{FF2}) and that any $\lambda \in (\mu_0, \mu_1)$ which satisfies  (\ref{FF2}) may be attained as a limit of a sequence $(\lambda_\var^k)_\var$; by this we can conclude that (\ref{LimitOperatorbis}) has no other eigenvalues on the left of $\mu_1$ than those obtained from the limits of the eigenvalues $\lambda_\var^k$ and thus we can list all its eigenvalues in increasing order. It is then clear that for a fixed $k$, we cannot have two subsequences $\var$ and $\var'$ with two different limits for $\lambda_\var^k$ and $\lambda^k_{\var'}$ since this would lead to add a new element to the set of eigenvalues of (\ref{FF2}); hence for each $k$, (\ref{CV0}) holds for the whole sequence $\var$.
 \par
 To prove the existence of non trivial solutions $\begin{pmatrix} u_0^k \\ v_k \end{pmatrix}$ for the system (\ref{FF2}) with $\lambda_k < \mu_1$ leading to non trivial solutions $\begin{pmatrix} u_k \\ v_k \end{pmatrix}$ for  (\ref{LimitOperatorbis})) where $u_k:= (\lambda_k u_0^k + 1) v_k$,  it is sufficient to show that one can find solutions $\begin{pmatrix} u_0^k \\ v_k \end{pmatrix}$ of (\ref{FF2}) with $v_k \not=0$. \par
\noindent  $u_0^k$ is uniquely determined by the first equation of (\ref{FF2}) since $  \lambda_k < \mu_1$ and if $(f_n)_n$ is the orthonormal basis in $L^2(D)$ made up of eigenfunctions  associated to the increasing sequence $(\mu_n)_n$ of eigenvalues of $-\Delta'_y$, one can get from the first equation of (\ref{FF2})
\begin{equation} \label{expressionintermsofeigenfunctions} u_0^k = \sum_{n=1}^\infty \frac{c_n f_n}{\mu_n - \lambda_k}; \, \hbox{where} \,  c_n= \int_D f_n \ dy. \end{equation}
Replacing the mean value of $u_0^k$ in the second equation of (\ref{FF2}), we derive
\begin{equation} \label{secondequation}  
 \display - \frac{d^2 v_k}{dx_3^2} = \delta(\lambda_k) v_k; \, \hbox{with} \, \delta(\lambda):= C\lambda + C' \sum_{n=1}^\infty \frac{c^2_n \lambda^2}{\mu_n - \lambda},
 \end{equation} where $C, C'$ denote positive constants.\par \noindent
 
Let $(\gamma_j, v_j)$ be an eigenelement of $\display -\frac{d^2}{dx_3^2}$ in $H_0^1(0,L)$. Since $  \delta $ is a strictly positive increasing function over $(0, \mu_1) $, there exists $ \lambda_{k_j} \in (0, \mu_1)$ such that  $\gamma_j= \delta(\lambda_{k_j})$, so that the second equation of  (\ref{FF2}) may be written as  $ \display - \frac{d^2 v_j}{dx_3^2} = \delta(\lambda_{k_j}) v_j$, taking $ v_{{k_j}}:=v_j$. Hence for $\lambda_k < \mu_1$, the pair $(u_0^k, v_{{k_j}}) $ is a non trivial solution for any $j=1, 2,...$
 
 We now argue by contradiction to prove that any $\lambda \in (\mu_0, \mu_1[$ which is an eigenvalue of (\ref{FF2}) may be attained as a limit of a sequence $(\lambda_\var^k)_\var$ for some $k$. 

\medskip

 If for any $k$ and for any sequence $\var$,  $\lambda_\var^k$ does not converge to $\lambda$, then there exists a neighborhood of $\lambda$ which does not contain any $\lambda_\var^k$ for all $k$. In other words, $\lambda$ belongs to the resolvent of the operator $A_\var$ defined by (\ref{newstrongformulationS'}) and there exists $\kappa > 0$ such that  $ \display \mbox{dist}(\lambda, \sigma(A_\var)) \geq \kappa$. Hence, for any $ f \in L^2(\Omega)$ there exists $ u_\var \in \, D(A_\var)$
  such that 
  \begin{equation} \label{lambda_kdans la résolvante} \display A_\var u_\var = \lambda u_\var + f \quad \hbox{in} \, \, \Omega.\end{equation}
In addition,
\begin{equation} \label{normedelaresolvante} \display \parallel (A_\var - \lambda)^{-1} \parallel =  \frac{1}{\mbox{dist}(\lambda, \sigma(A_\var))} \leq \frac{1}{\kappa}.\end{equation}
Inequality (\ref{normedelaresolvante}) provides the following estimate
\begin{equation} \label{suiteborneeL^2} \display \parallel u_\var \parallel_{L^2(\Omega)}= \parallel (A_\var - \lambda)^{-1}(f) \parallel_{L^2(\Omega)}  \leq \frac{1}{\kappa}\parallel f \parallel_{L^2(\Omega)}.\end{equation}
Hence, the sequence $u_\var$ is bounded in $L^2(\Omega)$.
Multiplying (\ref{lambda_kdans la résolvante}) by $\phi \in V_s$ and integrating we get
\begin{equation} \label{lambda_kdans la résolvante2}  \left\{\begin{array}{ll} \display
\int_\Omega \left( \left(\nabla' u_\var \nabla' \phi + \var^2 \frac{\partial u_\var}{\partial x_3} \frac{\partial \phi}{\partial x_3}\right) \chi_{F} +   \left(\frac{1}{\var^2}\nabla' u_\var \nabla' \phi +  \frac{\partial u_\var}{\partial x_3} \frac{\partial \phi}{\partial x_3}\right) \chi_{M} \right) dy \ dx_3   = \\\\
\lambda \display \int_\Omega u_\var \phi \ dy \ dx_3 +  \int_\Omega f \phi \ dy \ dx_3, \quad \forall \, \phi \in V_s.   \end{array} \right.
  \end{equation}
 
 Taking $\phi = u_\var$ in (\ref{lambda_kdans la résolvante2}), we get the same apriori estimates as those obtained for the sequence $u_\var^k$ and therefore one can pass to the limit  $\var \to 0$ in (\ref{lambda_kdans la résolvante2}) to derive the equation   
 \begin{equation} \label{FFavectermesource} \left\{\begin{array}{ll}  u(y,x_3) \in L^2((0,L); H^1(C)),\quad
 \displaystyle - \Delta'_y u (y,x_3) = \lambda u + f  \,\, \, \,  \hbox{in} \,\,  D \times (0, L),  \\\\
  u = v \quad   \hbox{on} \, \, \, \partial D \times (0, L), 
   \\\\
 
 v \in  H^1_0(0, L),  \quad \display - \frac{d^2 v}{dx_3^2} = \lambda v +\frac{ \lambda}{\vert C \setminus  D \vert} \displaystyle \int_{D} u\ dy +  \frac{1}{\vert C \setminus  D \vert} \displaystyle \int_{C } f \ dy \quad \hbox{in} \, \, \,  (0,L).
  \end{array} \right. \end{equation}
Choosing $f(y,x_3) =  g(x_3) \chi_{C \setminus D} (y)$  (which implies $ f = 0$ in $ D$)  with an arbitrary $g\in L^2(0,L)$, the second equation in (\ref{FFavectermesource}) reduces to
 \begin{equation} \label{vdanslarésolvante}
 v \in  H^1_0(0, L),  \quad \display - \frac{d^2 v}{dx_3^2} =  \lambda v +\frac{ \lambda}{\vert C \setminus D \vert} \displaystyle \int_{ D} u\ dy +  g  \quad \hbox{in} \, \, \,  (0,L).
 \end{equation}
Note that $v \not= 0$ for $g \not=0$. Indeed if $v=0$,  the first equation in (\ref{FFavectermesource}) would imply $u=0$ since we have chosen $f$ such that $f=0$ in $D$ and $ \lambda < \mu_1$ is not an eigenvalue of $-\Delta'_y$. Therefore equation  (\ref{vdanslarésolvante}) would give $g=0$ which is a contradiction. \par
Therefore, one can express $u$ as $u = (\lambda u_0 + 1) v$ where the pair $ (\lambda, u_0)$ solves the first equation of (\ref{FF2}). so that (\ref{vdanslarésolvante}) takes the form
\begin{equation} \label{vdanslarésolvantebis}
 v \in  H^1_0(0, L)),  \quad \display - \frac{d^2 v}{dx_3^2} =  \lambda \left( 1 + \frac{ \vert D \vert }{\vert C\setminus D \vert}  +  \frac{ \lambda} {\vert C \setminus D \vert} \displaystyle \int_{D} u_0\ dy \right) v +  g  \quad \hbox{in} \, \, \,  (0,L).
 \end{equation}
 On the other hand, by hypothesis, $\lambda$ is an eigenvalue of (\ref{FF2}) so that the last equation of (\ref{FF2}) with the same $u_0$ as in (\ref{vdanslarésolvantebis}) shows that $\display  \lambda \left( 1 + \frac{\vert D \vert}{ \vert C\setminus D\vert }  +  \frac{ \lambda} {\vert C \setminus D \vert} \displaystyle \int_{ D} u_0\ dy \right) $ is an eigenvalue of $\display -\frac{d^2}{dx_3^2}$. This is a contradiction since
equation (\ref{vdanslarésolvantebis}) valid for all $g \in L^2(0,L)$ means that the number $\display  \lambda \left( 1 + \frac{\vert D \vert}{ \vert C\setminus D\vert }  +  \frac{ \lambda} {\vert C \setminus D \vert} \displaystyle \int_{ D} u_0\ dy \right) $ belongs to the resolvent of $\display -\frac{d^2}{dx_3^2}$. 

\medskip

 We prove now that $\display \lim_{k \to +\infty} \lambda_k = \mu_1$. 

\medskip

 Since $\lambda_k \in (\mu_0, \mu_1)$ for any $k$, the sequence $(\lambda_k)_k$ admits at least an accumulation point and each accumulation point $\lambda$ is such that $\mu_0 \leq \lambda \leq \mu_1$. Assume that there exists an accumulation point $\lambda$ such that $ \lambda < \mu_1$. There exists a subsequence $(\lambda_{k_n}, u_0^{k_n}, v_{k_n})$ of solutions of (\ref{FF2}) such that $ \display \lim_{n \to + \infty} \lambda_{k_n} = \lambda$. Hence the following equation takes place for all $n$
 \begin{equation} \label{accumulationpoint}
 - \Delta' u_0^{k_n} = \lambda_{k_n} u_0^{k_n}  + 1 \quad \hbox{in} \, \, D.
  \end{equation}
  Let $ \delta$ be a positive number such that $ \lambda < \delta < \mu_1$. For $n$ large enough we have $\lambda_{k_n} \leq \delta$ so that applying the Poincar\'e inequality
  \begin{equation}\label{encorePoincare} 
\int_{ D} \lvert  u \rvert^2 \ dy  \leq \frac{1}{\mu_1} \int_{ D} \lvert \nabla'_y u \rvert^2 \ dy  \quad \forall \, u \in H_0^1(D),
 \end{equation} 
 after multiplying (\ref{accumulationpoint}) by $u_0^{k_n}$, we get for $n$ large enough
 \begin{equation}\label{encorePoincarebis} 
\int_{D} \lvert \nabla'_y u_0^{k_n} \rvert^2 \ dy  \leq \frac{\delta}{\mu_1} \int_{  D} \lvert \nabla'_y u_0^{k_n} \rvert^2 \ dy  +  \int_{ D} \lvert  u_0^{k_n} \rvert \ dy. 
 \end{equation} 
 Applying successively the Cauchy-Schwarz inequality and (\ref{encorePoincare}) in the last integral of (\ref{encorePoincarebis}), we infer
 \begin{equation}\label{encorePoincareter} 
\Bigl( 1 - \frac{\delta}{\mu_1}\Bigr) \int_{ D} \lvert \nabla'_y u_0^{k_n} \rvert^2 \ dy  \leq \sqrt{\lvert  D \rvert }    \sqrt{\frac{1}{\mu_1}} \sqrt{\int_{  D} \lvert \nabla'_y u_0^{k_n} \rvert^2 \ dy}.
 \end{equation} 
 Therefore, $(u_0^{k_n})_n$ is bounded in $H_0^1 (D)$ and one can assume (possibly for another subsequence) that  $(u_0^{k_n})_n$ converges weakly to $u_0$  in $ H_0^1(D)$. In particular we have that $\display  \lim_{n\to +\infty} \int_{ D} u_0^{k_n} \ dy =  \int_{ D} u_0 \ dy$. On the other hand $(\lambda_{k_n}, u_0^{k_n}, v_{k_n})$ being a solution of (\ref{FF2}), the following equation (recall that $v_{k_n} \not=0$ ) 
 \begin{equation}
 \display - \frac{d^2 v_{k_n}}{dx_3^2} =  \lambda_{k_n} \left( 1 + \frac{ \vert  D \vert }{\vert C \setminus D \vert}  +  \frac{ \lambda_{k_n}} {\vert C \setminus D \vert} \displaystyle \int_{ D} u_0^{k_n} \ dy \right) v_{k_n} \quad \forall \, n,
 \end{equation}
 shows that the number $\mu$ defined by $\mu:= \display \lambda \Bigl( 1 + \frac{\vert D \vert}{ \vert C\setminus D\vert }  +  \frac{ \lambda} {\vert C \setminus D \vert} \displaystyle \int_{D} u_0 \ dy \Bigr)$ is a finite accumulation point  of the spectrum of $\display - \frac{d^2}{dx_3^2}$ since  $\mu=  \display \lim_{n\to + \infty} \mu_n$ where $\mu_n:= \display \lambda_{k_n} \Bigl( 1 + \frac{\vert D \vert}{ \vert C \setminus  D\vert } +  \frac{ \lambda_{k_n}} {\vert C \setminus D \vert} \displaystyle \int_{ D} u_0^{k_n} \ dy \Bigr)$. This is a contradiction since it is well known that such spectrum is in fact an increasing sequence which tends to $+\infty$.  \par
 The last point which remains to prove is that all the limiting eigenvalues are simple and  that  $u_\var^k$ converges to $u_k$ for the whole sequence $\var$. Assuming that $\lambda_k$ is a simple eigenvalue, the proof of the convergence of the eigenvectors for the whole sequence $\var$  is known since the work of \cite{Van} (see also \cite{CioPau}). We sketch it in the vectorial setting for the convenience of the reader. \par
 Assume that $\begin{pmatrix} u_k \\ v_k \end{pmatrix}$ is an eigenvector associated to the simple eigenvalue $\lambda_k$. 
 Using the fact that the eigenvalues converge for the whole sequence $\var$, it is easy to check  that the multiplicity of $\lambda_k$ is  equal or greater than that of $\lambda_\var^k$; hence $\lambda_\var^k$ is simple and there are only two eigenvectors  satisfying $\display \int_\Omega \lvert u_\var^k \rvert^2 \ dx = 1$, namely $u_\var^k$ and $- u_\var^k$.  Among these two eigenvectors, we choose the one satisfying the inequality 
 \begin{equation}\label{orientation}
  \display \int_\Omega \left(u_\var^k \chi_F u_k + u_\var^k \chi_M v_k \right)  \ dy dx_3  >  0.
  \end{equation}
  Therefore if $\var'$ is a subsequence such that $\begin{pmatrix} u_{\var'}^k \chi_F \\ u_{\var'}^k \chi_M \end{pmatrix}$ strongly converges in $(L^2(\Omega))^2$ to the eigenvector
  $\begin{pmatrix} \hat u \chi_F \\ \hat v \chi_M \end{pmatrix}$ associated to $\lambda_k$ , we get by passing to the limit in (\ref{orientation}),
  \begin{equation}\label{orientationbis}
  \display \int_\Omega (\hat u \chi_F u_k + \hat v \chi_M v_k)  \ dy dx_3  >  0.  \end{equation}
   On the other hand, $\begin{pmatrix} u_k \chi_F \\ v_k \chi_M \end{pmatrix} = \begin{pmatrix} \hat u_k \chi_F \\ \hat v_k \chi_M \end{pmatrix}$ or $\begin{pmatrix} u_k \chi_F \\ v_k \chi_M \end{pmatrix} = - \begin{pmatrix} \hat u_k \chi_F \\ \hat v_k \chi_M \end{pmatrix}$ since $\lambda_k$ is a simple eigenvalue. The last equality is excluded thanks to (\ref{orientationbis}) so that any subsequence is such that $\begin{pmatrix} u_{\var'}^k \chi_F \\ u_{\var'}^k \chi_M \end{pmatrix}$ strongly converges in $(L^2(\Omega))^2$  to $\begin{pmatrix} u_k \chi_F \\ v_k \chi_M \end{pmatrix}$. 
	
	\medskip
	
   Let us now prove that all the limit eigenvalues are simple eigenvalues. 
	
	\medskip
	
	Assume that for some $k$,  (\ref{LimitOperatorbis}) holds true for two orthogonal eigenvectors $ \display 
 \begin{pmatrix} u_k \\ v_k \end{pmatrix} $  and  $ \display 
 \begin{pmatrix} \bar u_k \\ \bar v_k \end{pmatrix} $ in $\display L^2(D) \times L^2(0,L)$.  By assumption, we have
 \begin{equation} \label{orthogonality} 
\display \int_0^L \int_{D} u_k \bar u_k  dy dx_3 +  \vert C \setminus D\vert  \int_0^L  v_k \bar v_k  dx_3  = 0.
 \end{equation}
 We know that  $u_k$ and $\bar u_k$ are given respectively by $u_k(y,x_3) = ( \lambda_k u_0^k(y) +1)v_k(x_3)$ and 
$ \bar u_k(y,x_3) = ( \lambda_k u_0^k(y)+1)\bar v_k(x_3)$ where $ u_0^k(y)$ given by the first equation of (\ref{FF2}) depends only on the eigenvalue $\lambda_k$. 

\medskip

Turning back to (\ref{orthogonality}), we infer
\begin{equation} \label{orthogonalitybis} 
\display  \int_0^L \left( \left(\int_{D} \left( \lambda_k u_0^k(y) + 1\right)dy\right)^2 + \vert C \setminus D\vert \right) v_k(x_3) \bar v_k(x_3) dx_3 = 0.
 \end{equation}
 As remarked above $v_k$ and $\bar v_k$ are always eigenvectors of the operator $\display - \frac{d^2}{dx_3^2}$ with Dirichlet condition so that (\ref{orthogonalitybis}) and the second equation of (\ref{FF2})  would mean that $ v_k$ and $\bar v_k$ eigenvectors associated to the eigenvalue $ \display  \lambda_k \Bigl( 1 + \frac{\vert D \vert }{\vert C \setminus D \vert }+ \frac{ \lambda_k}{\vert C \setminus D \vert} \displaystyle \int_{ D} u^k_0 \ dy \Bigr)$ are orthogonal in $L^2(0,L)$. This is a contradiction since all the eigenvalues of  $\display - \frac{d^2}{dx_3^2}$ with Dirichlet condition are simple eigenvalues.

\medskip

 The proof of Theorem \ref{3d-1d}  is now complete. 

\medskip
 
 Finally, let us indicate briefly in the following short section how to derive the analogous theorem in the homogenization setting using the same approach as in the reduction of dimension.

\section{Proof of Theorem \ref{homogenization} }
In the spirit of the above section, the natural idea is to choose a test function vanishing outside the set $F_\var$ of fibers to get the apriori estimate on the sequence of eigenvalues. To that aim, we consider an eigenvector $\phi(y)$ corresponding to the first eigenvalue of $- \Delta'_y$ in $H_0^1(D)$. We extend $\phi$ by zero over $C \setminus D$ and then by periodicity to the whole $\mathbb R^2$. The  k-th eigenvalue $\lambda_\var^k$ of (\ref{FV1}) is given by the same min-max formula, namely
  \begin{equation}\label{minmax2}
\display  \lambda_\var^k =  \display \min_{V^k \subset V_h} \max_{u\in V^k} \frac{\display \int_\Omega \left(  \var^2\vert \nabla u \vert^2  \chi_{F_\var} + \vert \nabla u \vert^2  \chi_{M_\var} \right) \ dx' \ dx_3}{\display \int_\Omega \vert u \vert^2 \ dx' \ dx_3}.
   \end{equation}
   For each $\var$, we choose $V_\var^k \subset V_h$ as the subspace spanned by  $\left\{\phi(\frac{x'}{\var}) v^1, \phi(\frac{x'}{\var}) v^2, ..., \phi(\frac{x'}{\var}) v^k \right\} $ with the same $v^1, v^2, ..., v^k$ as those defined in the previous section, i.e., $k$ normalized orthogonal eigenvectors associated to the first k eigenvalues of $\display -\frac{d^2}{dx_3^2}$ in $H_0^1(0,L)$.   \par
Hence, by construction, the functions of $V_\var^k$ vanish in $M_\var$ so that making the change of variable $ x':= \var y + \var i, \,  y \in D$ in each cell, we can perform the same calculations as those of (\ref{calculminmax}) to get for $u \in V_\var^k$, 
  \begin{equation}\label{calculminmaxbis} \left\{\begin{array}{ll} \display  \int_\Omega u^2 dx' \ dx_3 = \sum_{i \in I_\var} \int_{\var  D + \var i} \phi^2 \left(\frac{x'}{\var}\right) \ dx' \ \int_0^L \bigl( \alpha_1^2 (v^1)^2 + ... + \alpha_k^2 (v^k)^2\bigr) \ dx_3 \\\
   = \display \bigl( \alpha_1^2 + ... + \alpha_k^2\bigr) \var^2 \sum_{i \in I_\var} \int_{D} \phi^2(y) \ dy, \\\\
 \display  \int_\Omega  \var^2 \vert \nabla'_{x'} u \vert^2 dx' \ dx_3 =  \bigl( \alpha_1^2 + ... + \alpha_k^2\bigr) \sum_{i \in I_\var} \var^2 \int_{\var  D + \var i} \left| \nabla'_{x'} \phi \left(\frac{x'}{\var}\right) \right|^2 \ dx' = \\\\
   \display \bigl( \alpha_1^2 + ... + \alpha_k^2\bigr) \var^4 \sum_{i \in I_\var} \int_{ D} \frac{1}{\var^2} \vert \nabla'_{y} \phi (y) \vert^2 dy
  \display = \bigl( \alpha_1^2 + ... + \alpha_k^2\bigr) \var^2 \mu_1 \sum_{i \in I_\var} \int_{ D}\vert \phi(y) \vert^2 dy, \\\\
 \display  \int_\Omega \var^2 \left| \frac{\partial  u}{\partial x_3}\right|^2 dx' \ dx_3 =  \var^2   \int_0^L \left( \alpha_1^2 \left(\frac{dv^1}{dx_3}\right)^2+ ... + \alpha_k^2 \left(\frac{dv^k}{dx_3}\right)^2\right) \ dx_3 \   
\var^2  \sum_{i \in I_\var} \int_{ D}\vert  \phi(y) \vert^2 dy \\\ 
   \qquad  \display =  \var^4   \bigl( \alpha_1^2 \lambda_1^0 + ... + \alpha_k^2 \lambda_k^0\bigr) \sum_{i \in I_\var} \int_{ D}\vert \phi \vert^2 \ dy \leq \var^4  \lambda_k^0  \bigl( \alpha_1^2 + ... + \alpha_k^2\bigr) \sum_{i \in I_\var} \int_{ D}\vert \phi \vert^2 dy,
   \end{array} \right.
  \end{equation} in such a way the following estimate holds true
   \begin{equation}\label{calculminmaxlast}  \display 
   \lambda_\var^k \leq \frac{ \display  \Bigl(\mu_1  + \var^2 \lambda_k^0\Bigr)  \Bigl( \alpha_1^2 + ... + \alpha_k^2\Bigr) \var^2 \sum_{i \in I_\var} \int_{ D}\vert \phi \vert^2 \ dy}{ \display \var^2\Bigl( \alpha_1^2 + ... + \alpha_k^2\Bigr)  \sum_{i \in I_\var} \int_{ D} \phi^2(y) \ dy } = \display \mu_1 + \var^2 \lambda_k^0,
   \end{equation} 
   which is exactly the same estimate as that obtained in (\ref{fundamentalestimate}). \par
   \begin{Remark}\label{Comparaison}
   It is interesting to note in the proof of (\ref{calculminmaxlast}), we have chosen a test function verifying the same properties as those of the $3d-1d$ case, namely: null in the matrix and with the regularity $ H_0^1(0,L)$ for almost all $x'$.
   \end{Remark}
    Remark \ref{Comparaison} is of general relevance since the other proofs in the homogenization setting 
    are similar in all points to the corresponding ones in the $3d-1d$ problem, the main reason being that the vertical variable is not concerned by the homogenization process which occurs only with respect to the horizontal variable $x'$ in such a way basically, the local $3d-1d$ effect is repeated periodically in the horizontal plane. 
    Hence all the proofs take up exactly the 3d-1d case while sticking to two principles: Dirichlet condition on $x_3=0$ or $x_3=L$ both for the $3d-1d$ problem and the homogenization problem and when $x_3$ plays the role of a parameter as it is the case for example in equation (\ref{equationsurz_k}), it is x that will play the role of a parameter in the homogenization problem. Indeed for instance, the natural formulation of equation (\ref{equationsurzepsilon}) in the homogenization setting is the following one
     \begin{equation}\label{equationsurzepsilonhomog.} \left\{\begin{array}{ll}  \displaystyle z_\var^k \in V_h, \quad
   -  \Delta'_{x'} z_\var^k -  \var^2\frac{\partial^2 z_\var^k}{\partial x_3^2} = \lambda_\var^k z_\var^k + \lambda_\var^k U_\var^k    \quad \hbox{in} \, \, \, F_\var,\\\\
  z_\var^k = 0 \quad \hbox{on} \, \, \, \partial D_\var^{i} \times (0,L),
   \end{array} \right. \end{equation}
   in such a way passing to the two-scale limit in (\ref{equationsurzepsilonhomog.}), we get the equivalent of (\ref{equationsurz_k})
  \begin{equation}\label{equationsurz_khomog.} \left\{\begin{array}{ll}  \displaystyle z_k \in  L^2(\Omega; H_{\#}^1(C)), \quad 
   -  \Delta'_y z_k  = \lambda_k z_k + \lambda_k U_k    \quad \hbox{in} \, \, \, \Omega \times D,\\\\
  z_k = 0 \quad \hbox{on} \, \, \, \Omega \times \partial D.
   \end{array} \right. \end{equation}
   The same approach may be applied to the other proofs following exactly the same steps and replacing the weak (resp. strong) convergence in $L^2(\Omega)$ by the two-scale (resp. strong two-scale) convergence.
  
 \bigskip \bigskip

     \end{document}